\title{Quasisymmetric functions distinguishing trees}
\author{Jean-Christophe Aval}% 
\address{LaBRI, CNRS, Universit\'e de Bordeaux, 351 cours de la Lib\'eration, 33405 Talence, France}%
 \email{jean-christophe.aval.1@u-bordeaux.fr}%
\author{Karimatou Djenabou}%
\address{African Institute for Mathematical Sciences, 6 Melrose Road, Muizenberg 7945, South Africa}%
 \email{karimatou@aims.ac.za}%
\author{Peter R. W. McNamara}%
\address{Department of Mathematics, Bucknell University, Lewisburg, PA 17837, USA}%
\email{peter.mcnamara@bucknell.edu}
\theoremstyle{plain}
\newtheorem{theorem}{Theorem}[section]
\newtheorem{conjecture}[theorem]{Conjecture}
\newtheorem{proposition}[theorem]{Proposition}
\newtheorem{lemma}[theorem]{Lemma}
\newtheorem{corollary}[theorem]{Corollary}
\newtheorem{question}[theorem]{Question}
\theoremstyle{definition}
\newtheorem{definition}[theorem]{Definition}
\newtheorem{remark}[theorem]{Remark}
\newtheorem{example}[theorem]{Example}
\numberwithin{equation}{section}
\numberwithin{figure}{section}
\definecolor{darkgreen}{rgb}{0,0.3,0}
\newcommand{\diG}{\overrightarrow{G}}
\newcommand{\diH}{\overrightarrow{H}}
\newcommand{\asc}{\mathrm{asc}}
\newcommand{\qsym}{{\it{QSym}}}
\newcommand{\x}{\mathbf{x}}
\newcommand{\om}{\omega}
\newcommand{\anti}{\mathrm{anti}}
\newcommand{\wt}{\mathrm{wt}}
\newcommand{\KK}[1]{\overline{K}_{#1}}
\newcommand{\kpw}{K_{(P,\om)}}
\newcommand{\des}{\mathrm{des}}
\renewcommand{\L}{\mathcal{L}}
\newcommand{\co}{\mathrm{co}}
\newcommand{\inv}[1]{\overline{#1}}
\newcommand{\Cb}{\mathcal{C}}
\newcommand{\wP}{\uparrow}
\newcommand{\sP}{\Uparrow}
\newcommand{\pa}
{\begin{tikzpicture}[scale=0.25] 
\tikzstyle{every node}=[draw, shape=circle, inner sep=1pt]; 
\draw (0,0) node (a1) {};
\draw (1,0) node (a2) {};
\draw (0,1) node (a3) {};
\draw (1,1) node (a4) {};
\draw (a1) -- (a3) -- (a2) -- (a4) ;
\end{tikzpicture}}
\newcommand{\pc}
{\begin{tikzpicture}[scale=0.25] 
\tikzstyle{every node}=[draw, shape=circle, inner sep=1pt]; 
\draw (0,0) node (a1) {};
\draw (1,0) node (a2) {};
\draw (0,1) node (a3) {};
\draw (1,1) node (a4) {};
\draw (a1) -- (a3) -- (a2) -- (a4) -- (a1) ;
\end{tikzpicture}}
\newcommand{\pf}
{\begin{tikzpicture}[scale=0.25] 
\tikzstyle{every node}=[draw, shape=circle, inner sep=1pt]; 
\draw (0,1) node (a1) {};
\draw (-0.5,0) node (a2) {};
\draw (0.5,0) node (a3) {};
\draw (a2) -- (a1) ;
\draw [double distance = 1pt] (a3) -- (a1) ;
\end{tikzpicture}}
\newcommand{\pe}
{\begin{tikzpicture}[scale=0.25] 
\tikzstyle{every node}=[draw, shape=circle, inner sep=1pt]; 
\draw (0,0) node (a1) {};
\draw (-0.5,1) node (a2) {};
\draw (0.5,1) node (a3) {};
\draw (a2) -- (a1) ;
\draw [double distance = 1pt] (a3) -- (a1) ;
\end{tikzpicture}}
\newcommand{\shiftpics}[1]{\raisebox{-0.6ex}{#1}}
\newcommand{\pac}{(\shiftpics{\pa}\,,\shiftpics{\pc})}
\newcommand{\pef}{[\shiftpics{\pe},\shiftpics{\pf}]}
\begin{document}

\begin{abstract}
A famous conjecture of Stanley states that his chromatic symmetric function distinguishes trees.  As a quasisymmetric analogue, we conjecture that the chromatic quasisymmetric function of Shareshian and Wachs and of Ellzey distinguishes directed trees.  This latter conjecture would be implied by an affirmative answer to a question of Hasebe and Tsujie about the $P$-partition enumerator distinguishing posets whose Hasse diagrams are trees.   They proved the case of rooted trees and our results include a generalization of their result.
\end{abstract}

\keywords{chromatic, quasisymmetric function, digraph, poset, P-partition,  rooted tree}

\maketitle

%\tableofcontents

%%%%%%%%%%%%%%%%%%%%%%%%%
%%%%%%%%%%%%%%%%%%%%%%%%%
\section{Introduction}
%%%%%%%%%%%%%%%%%%%%%%%%%
%%%%%%%%%%%%%%%%%%%%%%%%%
 
As an extension of the chromatic polynomial $\chi_G(k)$ of a graph $G =  (V,E)$, Stanley \cite{Sta95} introduced the chromatic symmetric function $X_G(\x)$ defined by
\begin{equation}\label{equ:stanley}
X_G(\x) = \sum_\kappa x_1^{\# \kappa^{-1}(1)} x_2^{\# \kappa^{-1}(2)} \cdots
\end{equation}
where the sum is over all proper colorings $\kappa : V \to \{1,2,\ldots\}$.  Observe that setting $x_i=1$ for $1\leq i \leq k$ and $x_i=0$ otherwise yields $\chi_G(k)$.  Two famous and unsolved conjectures appear in \cite{Sta95}.  One of these, known as the Stanley--Stembridge conjecture \cite[Conj.~5.5]{StSt93}\cite[Conj.~5.1]{Sta95} is about the $e$-positivity of $X_G(\x)$ for incomparability graphs of $(\mathbf{3}+\mathbf{1})$-free posets and does not concern us here.  Of more interest to us is that Stanley gave a pair of non-isomorphic graphs on five vertices (see Figure~\ref{fig:equalxg}(c) with the arrows removed) that have the same $X_G(\x)$.  As a result, we say that $X_G(\x)$ does not \emph{distinguish} graphs.  Stanley stated ``We do not know whether $X_G$ distinguishes trees.''  Subsequent papers, such as \cite{AMZ17,AMZ21,AlZa14,Fou03,HeJi19,HuCh20,LoSe19,MMW08,OrSc14,SST15}, have established that $X_G(\x)$ distinguishing trees is certainly worthy of being called a conjecture; for example, \cite{HeJi19} shows that $X_G(\x)$ distinguishes trees with up to 29 vertices.

We focus on a generalization of $X_G(\x)$ to labeled graphs introduced by Shareshian and Wachs \cite{ShWa16}, denoted $X_G(\x, t)$, which has an extra parameter $t$ and is now just a quasisymmetric function in general.  In fact, we will use a further generalization of $X_G(\x)$ to directed graphs (digraphs) $\diG$ due to Ellzey \cite{Ell17,Ell17+,EllThesis} and denoted $X_{\diG}(\x,t)$. 

Our original goal in this project was to study equality among $X_{\diG}(\x, t)$.  It is not obvious if $X_{\diG}(\x, t)$ will be more or less successful at distinguishing digraphs compared to $X_G(\x)$ distinguishing graphs: there are far more digraphs than graphs for a given number of vertices, but $X_{\diG}(\x, t)$ contains more information than $X_G(\x)$.  It is not hard to find digraphs with the same $X_{\diG}(\x, t)$; three such equalities are given in Figure~\ref{fig:equalxg}.  

\tikzset{->-/.style={decoration={
  markings,
  mark=at position #1 with {\arrow[scale=1.5]{Straight Barb}}},postaction={decorate}}}
  
\begin{figure}
\begin{center}
\begin{tikzpicture}[scale=1.5] 

\begin{scope} 
\tikzstyle{every node}=[draw, shape=circle, inner sep=2pt]; 
\begin{scope}  
\draw (0,0) node (a1) {};
\draw (1,0) node (a2) {};
\draw (0,1) node (a3) {};
\draw (1,1) node (a4) {};
\draw[->-=0.55] (a1) -- (a3);
\draw[->-=0.55]  (a3) -- (a2);
\draw[->-=0.55]  (a3) -- (a4);
\draw[->-=0.55]  (a4) -- (a2);
\end{scope}  
\begin{scope}[yshift=10ex] \draw (0,0) node (a1) {};
\draw (1,0) node (a2) {};
\draw (0,1) node (a3) {};
\draw (1,1) node (a4) {};
\draw[->-=0.55] (a3) -- (a1);
\draw[->-=0.55] (a2) -- (a3);
\draw[->-=0.55] (a4) -- (a3);
\draw[->-=0.55] (a2) -- (a4);
\end{scope}
\end{scope}
\begin{scope}
\draw (0.5,-0.75) node {(a)};
\end{scope} 

\begin{scope}[xshift=15ex]
\begin{scope} 
\tikzstyle{every node}=[draw, shape=circle, inner sep=2pt]; 
\begin{scope}  
\draw (0,0) node (a1) {};
\draw (1,0) node (a2) {};
\draw (0,1) node (a3) {};
\draw (1,1) node (a4) {};
\draw[->-=0.55] (a1) -- (a3);
\draw[->-=0.55] (a3) -- (a4);
\draw[->-=0.55] (a2) -- (a1);
\draw[->-=0.75] (a4) -- (a1);
\draw[->-=0.35] (a3) -- (a2);
\draw[->-=0.55] (a4) to [bend left=20] (a2);
\draw[->-=0.55] (a2) to [bend left=20] (a4);
\end{scope}  
\begin{scope}[yshift=10ex] 
\draw (0,0) node (a1) {};
\draw (1,0) node (a2) {};
\draw (0,1) node (a3) {};
\draw (1,1) node (a4) {};
\draw[->-=0.55] (a1) -- (a3);
\draw[->-=0.55] (a3) -- (a4);
\draw[->-=0.55] (a1) -- (a2);
\draw[->-=0.75] (a4) -- (a1);
\draw[->-=0.75] (a2) -- (a3);
\draw[->-=0.55] (a4) to [bend left=20] (a2);
\draw[->-=0.55] (a2) to [bend left=20] (a4);
\end{scope}
\end{scope} 
\begin{scope}
\draw (0.5,-0.75) node {(b)};
\end{scope}
\end{scope} 

\begin{scope}[xshift=30ex] 
\begin{scope} 
\tikzstyle{every node}=[draw, shape=circle, inner sep=2pt]; 
\begin{scope}  
\draw (0,0) node (a1) {};
\draw (1,0) node (a2) {};
\draw (0,1) node (a3) {};
\draw (1,1) node (a4) {};
\draw (2,0) node (a5) {};
\draw[->-=0.55] (a1) -- (a3);
\draw[->-=0.55] (a1) -- (a2);
\draw[->-=0.55] (a2) -- (a3);
\draw[->-=0.55] (a2) -- (a4);
\draw[->-=0.55] (a3) -- (a4);
\draw[->-=0.55] (a4) -- (a5);
\end{scope}
\begin{scope}[yshift=10ex] 
\draw (0,0) node (a1) {};
\draw (0,1) node (a2) {};
\draw (1,0.5) node (a3) {};
\draw (2,0) node (a4) {};
\draw (2,1) node (a5) {};
\draw[->-=0.55] (a1) -- (a2);
\draw[->-=0.55] (a1) -- (a3);
\draw[->-=0.55] (a2) -- (a3);
\draw[->-=0.55] (a3) -- (a4);
\draw[->-=0.55] (a3) -- (a5);
\draw[->-=0.55] (a4) -- (a5);
\end{scope}
\end{scope}
\begin{scope}
\draw (1,-0.75) node {(c)};
\end{scope}
\end{scope}

\end{tikzpicture}
\end{center}
\caption{Pairs of digraphs with equal chromatic quasisymmetric functions}
\label{fig:equalxg}
\end{figure}
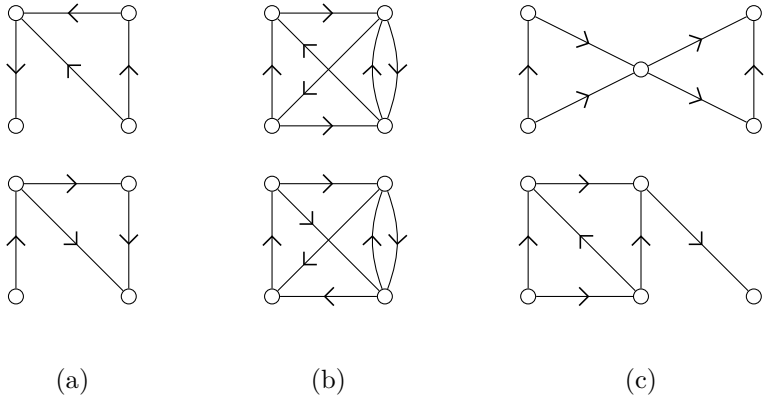

One way to bring Stanley's conjecture into the quasisymmetric setting would be by stating that $X_{\diG}(\x, t)$ distinguishes directed acyclic graphs, but (a) and (c) of Figure~\ref{fig:equalxg} show that such a statement is false.  Instead, we offer the following conjecture, which is a natural extension of Stanley's conjecture to the quasisymmetric setting; it is stated as a question in \cite{AlSu21}.

\begin{conjecture}\label{con:chromatic_trees}
$X_{\diG}(\x, t)$ distinguishes directed trees.
\end{conjecture}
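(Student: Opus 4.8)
The plan is to pass to posets and reduce to the question of Hasebe and Tsujie. First observe that since a tree has no cycles, every orientation of a tree is acyclic; moreover, because any two vertices of a tree are joined by a unique path, each oriented edge $u\to v$ is a cover relation and, conversely, every cover relation is an oriented edge. Hence a directed tree is exactly the Hasse diagram of the poset $P$ it generates, and directed trees are in bijection with posets whose Hasse diagram is a tree (\emph{tree posets}). Using the link between $X_{\diG}(\x,t)$ and the $(P,\om)$-partition enumerator $\kpw$ noted in the abstract, one reduces Conjecture~\ref{con:chromatic_trees} to the statement that $\kpw$ distinguishes tree posets: if two directed trees share the same $X_{\diG}(\x,t)$, then their associated tree posets share the same $\kpw$, distinguishing tree posets then forces a poset isomorphism, and this isomorphism transfers back through the bijection to an isomorphism of directed trees.

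Granting this reduction, I would prove that $\kpw$ distinguishes tree posets by reconstructing $P$ from $\kpw$ by induction on the number of vertices, thereby extending the rooted-tree argument of Hasebe and Tsujie. The invariants to mine from $\kpw$ are: the number of vertices (the degree of the quasisymmetric function); the number of linear extensions together with the full descent distribution, read off from the expansion of $\kpw$ in the fundamental basis indexed by $L\in\L(P)$ and their descent sets; and the coefficients in the monomial basis, which count surjective $(P,\om)$-partitions by content and thus encode the chain and antichain data of $P$. The inductive step would use these to single out a structurally distinguished leaf $\ell$ of the Hasse diagram together with its orientation (whether $\ell$ is minimal or maximal and which vertex it is incident to), establish a deletion recursion relating $\kpw$ to the enumerator of $P\setminus\ell$, and then invert that recursion to recover the smaller enumerator and invoke the induction hypothesis.

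The main obstacle is the passage from rooted trees to arbitrary tree posets. In a rooted tree every edge carries the same orientation relative to the root, which gives the induction a canonical direction and makes a peelable leaf easy to locate; for a general tree poset the Hasse edges point both up and down, there can be many minimal and maximal elements, and $\kpw$ need not obviously single out a leaf or expose its orientation. Two non-isomorphic tree posets could a priori agree on all of the easily extracted invariants above. Overcoming this requires a finer, orientation-aware invariant that pins down a canonical peelable leaf---for instance by rooting at a centroid and tracking an up/down sign on each edge---or else a global reconstruction that bypasses leaf-peeling entirely. Proving that the deletion recursion is invertible from $\kpw$ alone is the crux, and is exactly the gap between the known rooted case and the full conjecture.
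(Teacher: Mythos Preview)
The statement you are attempting to prove is a \emph{conjecture}; the paper does not prove it and explicitly leaves it open. So there is no ``paper's own proof'' to compare against. What the paper does do is exactly the reduction in your first paragraph: it observes that a directed tree is the Hasse diagram of a tree poset, that the coefficient of the top power of $t$ in $X_{\diG}(\x,t)$ is the strict $P$-partition enumerator $\KK{P}$, and hence that Conjecture~\ref{con:chromatic_trees} would follow from Conjecture~\ref{con:poset_trees} (that $\KK{P}$, equivalently $K_P$, distinguishes tree posets). Your reduction is thus aligned with the paper's, though you should be precise that the relevant enumerator is the one with all strict (or all weak) edges, not a general $\kpw$.

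Your second and third paragraphs are not a proof but a sketch of a strategy together with an honest accounting of why it does not go through. That accounting is accurate: the passage from rooted trees (handled by Hasebe--Tsujie) to arbitrary tree posets is precisely the open problem, and the paper says as much, citing it as \cite[Problem~6.1]{HaTs17} and reporting computer verification only through $11$ elements. The invariants you list (degree, linear-extension count, descent distribution, monomial coefficients) are all genuinely readable from $K_P$, and the paper's Section~\ref{sec:poset_results} catalogues many more such necessary conditions; but as the paper also notes, none of them gives a \emph{systematic} way to distinguish all pairs of trees. Your proposed leaf-peeling induction would require showing that $K_P$ determines both a canonical leaf and $K_{P\setminus\ell}$, and no such inversion is known. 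In short, you have correctly located the reduction the paper makes and correctly identified the obstruction; what remains is the conjecture itself.
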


In other words, the conjecture states that if directed trees
$\diG$ and $\diH$ are not isomorphic then $X_{\diG}(\x, t) \neq X_{\diH}(\x, t)$.  Our approach to tackling Conjecture~\ref{con:chromatic_trees} will be to translate it into a question about posets.  When $\diG$ is a directed acyclic graph, we can represent $\diG$ as a poset  by saying $v_i \leq v_j$ if there is a directed path from $v_i$ to $v_j$.  In other words, draw $\diG$ so that all the edges point upwards on the page; removing the arrows results in a Hasse diagram of a poset, possibly with some redundant edges; see Figure~\ref{fig:dg2poset}.  Let $P$ denote the resulting poset.  As we will see, the coefficient of the highest power of $t$ that appears in $X_{\diG}(\x, t)$ will be the well-known strict $P$-partition enumerator $\KK{P}(\x)$, a fact previously mentioned in \cite[Theorem~7.4]{AlSu21} and \cite[p.~11]{Ell17+}.   Obviously two chromatic quasisymmetric functions are different if their coefficients on the highest power of $t$ are different.  So to prove Conjecture~\ref{con:chromatic_trees} it suffices to prove the following conjecture.  A poset being a tree simply means its Hasse diagram is a tree.

\begin{conjecture}\label{con:poset_trees}
$\KK{P}(\x)$ distinguishes posets that are trees. 
\end{conjecture}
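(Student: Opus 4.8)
The plan is to first reformulate the statement in purely combinatorial terms. As observed in the paragraph preceding the conjecture, orienting the edges of a tree and passing to the transitive closure produces a poset whose Hasse diagram is exactly that oriented tree: no edge can be redundant, because between any two vertices of a tree there is a unique path. Thus every tree poset comes from a unique oriented tree, and Conjecture~\ref{con:poset_trees} is equivalent to reconstructing an oriented tree, up to isomorphism, from $\KK{P}(\x)$. I would attack this by induction on the number of vertices $n$, proving the stronger statement that $\KK{P}(\x)$ distinguishes posets whose Hasse diagrams are \emph{forests}. This strengthening is forced on us because order ideals and filters of a tree poset are forests rather than trees, and it is harmless because the operator $\KK{}$ is multiplicative on disjoint unions, $\KK{P \sqcup Q} = \KK{P}\,\KK{Q}$.

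The engine of the induction is the Hopf structure of \qsym. Since the strict $P$-partition enumerator is a morphism from the Hopf algebra of posets to \qsym, the intrinsic coproduct of \qsym\ satisfies, up to the usual conventions,
\[
\Delta\,\KK{P} = \sum_{I} \KK{P|_I} \otimes \KK{P \setminus I},
\]
the sum running over the order ideals $I$ of $P$. The left-hand side is computed from the abstract element $\KK{P} \in \qsym$ alone, so every homogeneous component of $\Delta\,\KK{P}$ is a known invariant. First I would read off the elementary data: the degree recovers $n$, and grading the coproduct by $|I|$ recovers, for each $k$, the element $\sum_{|I| = k} \KK{P|_I} \otimes \KK{P \setminus I}$. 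The case $k = 1$ isolates $M_{(1)} \otimes \bigl(\sum_{m} \KK{P \setminus m}\bigr)$, where $m$ ranges over the minimal elements of $P$ (the sources of the oriented tree), and symmetrically the top graded piece recovers $\sum_{M} \KK{P \setminus M}$ over the maximal elements (the sinks). By the inductive hypothesis each $\KK{P \setminus m}$ determines the forest $P \setminus m$, so these components package the deletions of all sources and of all sinks.

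The remaining steps are to (i) extract from $\KK{P}$ the source/sink degree data of the oriented tree — for instance the in- and out-degree multisets, which I expect to be visible in the low-order monomial-quasisymmetric coefficients and in how the vertex-deleted forests of the previous paragraph split into connected components — and (ii) show that knowing, for each extreme vertex, the isomorphism type of the forest left after its deletion, together with this degree data, determines how the pieces reattach, thereby recovering the oriented tree. Step (ii) is where I expect the genuine difficulty to lie, for two intertwined reasons. The coproduct delivers only the \emph{sum} $\sum_m \KK{P \setminus m}$ rather than the multiset of its summands, so separating the individual deletions requires extra invariants; and even with the full multiset of vertex-deleted forests in hand, reconstructing the tree from its ``deck'' is precisely the kind of reconstruction problem that has kept the chromatic-symmetric-function analogue of Stanley's conjecture open.

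A natural intermediate target, and the one I would establish first, is to rerun this argument with a distinguished minimal (or maximal) element fixed. Fixing such a vertex breaks the symmetry and removes the multiset ambiguity: the complementary filter splits, by multiplicativity of $\KK{}$, into the enumerators of the subtrees hanging from that vertex, and unique factorization in \qsym\ then drives a clean induction. This recovers the rooted-tree case of Hasebe and Tsujie. Generalizing beyond a single canonical root — that is, handling trees with no preferred extreme vertex, where the reattachment of pieces is no longer rigidly determined — is the crux that separates the known partial results from the full conjecture.
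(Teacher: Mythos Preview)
The statement you are attempting to prove is labeled a \emph{conjecture} in the paper, and indeed the paper offers no proof of it: the authors state only that it has been verified computationally for posets with at most 11 elements, and that it appeared earlier as an open question in Hasebe--Tsujie. So there is no ``paper's own proof'' to compare against.

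Your proposal is candid about this: you do not claim a proof either. What you have written is a research programme, and you correctly flag the point where it breaks down. The coproduct identity $\Delta\,\KK{P} = \sum_I \KK{P|_I}\otimes\KK{P\setminus I}$ does deliver the sums $\sum_m \KK{P\setminus m}$ over minimal (resp.\ maximal) elements, and the strengthening to forests is harmless by the irreducibility result of Liu--Weselcouch (cited in the paper as Corollary~\ref{cor:factors}). But your step (ii) --- passing from the aggregate $\sum_m \KK{P\setminus m}$ to the multiset of deletions, and then from that deck to the tree --- is not a gap you have merely left unfilled; it is the whole problem. Even in the undirected symmetric-function setting, this deck-reconstruction step is exactly what has kept Stanley's original conjecture open for thirty years, and you acknowledge as much. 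The rooted case you isolate at the end is indeed Hasebe--Tsujie's theorem, and the paper's own contribution (Theorem~\ref{thm:fair-trees}) is a generalization of that rooted result in an orthogonal direction (allowing mixed strict/weak edges), not progress toward the unrooted conjecture.

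In short: your outline is a reasonable way to organize one's thinking about the problem, and it correctly locates the obstruction, but it is not a proof and neither is anything in the paper.
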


We have verified this conjecture for all such posets with at most 11 elements.  A main advantage of the poset setting is that Conjecture~\ref{con:poset_trees}, and equality among $\KK{P}$ in general, has already been studied in the literature as we detail in Section~\ref{sec:poset_results}.  In fact, Conjecture~\ref{con:poset_trees} appears as a question in \cite[Problem~6.1]{HaTs17}.  Section~\ref{sec:prelims} will give the necessary background, including definitions of the quasisymmetric functions mentioned above.  While we only needed the  \emph{strict} $P$-partition enumerator above, in Section~\ref{sec:strict_weak_edges}, we consider the original $(P,\om)$-partition enumerator $\kpw(\x)$ which allows for a mixture of strict and weak relations.  Such $(P,\om)$ are called $\emph{labeled posets}$.  Interestingly, $\kpw$ does \emph{not} distinguish labeled posets that are trees, but we offer the following conjecture.  A poset that is a tree is said to be a \emph{rooted tree} if it has a unique minimal element.

\begin{conjecture}\label{con:labeled_rooted_trees}
$\kpw(\x)$ distinguishes labeled posets that are rooted trees.
\end{conjecture}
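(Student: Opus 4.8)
The plan is to induct on $|P|$, exploiting the fact that a rooted tree has a \emph{unique} minimal element $r$ (its root); this is the feature that fails for general trees, consistent with the remark that $\kpw$ does not distinguish unrooted trees. Write $(T_1,\om_1),\dots,(T_k,\om_k)$ for the labeled rooted subtrees hanging from $r$, and let $\epsilon_i\in\{\mathrm{strict},\mathrm{weak}\}$ record the type of the cover relation joining $r$ to the root of $T_i$ (recall that $\kpw$ depends only on the poset together with this strict/weak edge data). Classifying $(P,\om)$-partitions by the value $a=f(r)$, and noting that a weak (resp.\ strict) child forces its subtree to take values $\ge a$ (resp.\ $\ge a+1$) while distinct subtrees are constrained independently, gives the recursion
\begin{equation*}
\kpw=\sum_{a\ge1}\sigma^{a-1}\!\bigl(x_1\,h\bigr),\qquad h=\prod_{i:\,\epsilon_i=\mathrm{weak}}K_{(T_i,\om_i)}\cdot\prod_{i:\,\epsilon_i=\mathrm{strict}}\sigma K_{(T_i,\om_i)},
\end{equation*}
where $\sigma$ is the shift $x_j\mapsto x_{j+1}$ and we use that $\sigma K=K|_{x_1=0}$ for quasisymmetric $K$.

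First I would show this recursion can be inverted at the level of power series. The operator $\Phi\colon h\mapsto\sum_{a\ge1}\sigma^{a-1}(x_1h)$ is injective: every term of $\sigma^{a-1}(x_1h)$ with $a\ge2$ is free of $x_1$, while $x_1h$ is divisible by $x_1$, so the $x_1$-containing part of $\kpw=\Phi(h)$ is exactly $x_1h$. Hence $h=(\kpw-\kpw|_{x_1=0})/x_1$ is recovered from $\kpw$. Setting $x_1=0$ in $h$ and undoing the shift recovers $\sigma^{-1}\bigl(h|_{x_1=0}\bigr)=\prod_{i=1}^{k}K_{(T_i,\om_i)}=K_{(F,\om)}$, the enumerator of the whole \emph{forest} $F=T_1\sqcup\cdots\sqcup T_k$ obtained by deleting the root, now \emph{without} the decorations $\epsilon_i$.

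Then I would recover the multiset $\{(T_1,\om_1),\dots,(T_k,\om_k)\}$ of subtrees from $K_{(F,\om)}=\prod_i K_{(T_i,\om_i)}$, and \emph{separately} recover the decorations $\epsilon_i$ from the finer information in $h$: a strict child contributes the factor $\sigma K_{(T_i,\om_i)}$, which is free of $x_1$, whereas a weak child contributes $K_{(T_i,\om_i)}$, which is not, so comparing $h$ against the candidate products built from the now-known subtree multiset should pin down which children are strict. By the inductive hypothesis each $(T_i,\om_i)$ is determined by its enumerator, so this reconstructs $(P,\om)$ up to isomorphism.

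The hard part — and the reason the statement remains only a conjecture — is the multiplicative independence that both of the last two steps require. Recovering the subtree multiset from the single product $\prod_i K_{(T_i,\om_i)}$ is a grouping problem: although $\qsym$ is a polynomial ring and hence a unique factorization domain, the enumerator of a connected rooted tree need not be irreducible, so one must rule out the coincidence that a forest enumerator factors as the tree enumerators of a \emph{different} forest. Even granting the subtree multiset, recovering the decorations demands that the decorated products $\prod_{\mathrm{weak}}K_{(T_i,\om_i)}\prod_{\mathrm{strict}}\sigma K_{(T_i,\om_i)}$ be pairwise distinct as the decoration varies; already for two children this asks that the ratios $K_{(T,\om)}/\sigma K_{(T,\om)}$ separate non-isomorphic rooted trees, since otherwise swapping which of two subtrees is strict would leave $h$, and hence $\kpw$, unchanged. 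Establishing these independence properties — extending the strict-case analysis of Hasebe and Tsujie \cite{HaTs17} to the present decorated setting — is where I expect the essential difficulty to lie.
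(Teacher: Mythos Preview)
The statement is Conjecture~\ref{con:labeled_rooted_trees}, which the paper explicitly does \emph{not} prove; you correctly recognize this and present a strategy with honestly identified obstructions rather than a proof. Your root-deletion recursion and the recovery of $h$ from $\kpw$ via the $x_1$-containing part are correct manipulations.

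The paper's actual progress toward this conjecture is Theorem~\ref{thm:fair-trees}, which handles the special case of \emph{fair trees} (and more generally the class $\Cb$): rooted trees in which every vertex has either all strict or all weak outgoing edges. The paper's method differs from yours in two respects. First, rather than the monomial shift $\sigma$, the paper works with the operations $[1]\wP(\cdot)$ and $[1]\sP(\cdot)$ on the $F$-basis (Proposition~\ref{prop:k_products}); for a fair tree, root deletion is exactly one of these, and the paper shows which one applies is visible directly from the first parts of the compositions in the $F$-support. Second --- and this is the crucial design choice --- restricting to fair trees eliminates your decoration-recovery problem entirely: there is a single bit at the root, not $k$ independent $\epsilon_i$, so the ratio condition $K_{(T,\om)}/\sigma K_{(T,\om)}$ you isolate never arises.

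Both approaches pivot on irreducibility of $\kpw$ for connected labeled posets, and the paper agrees with you that this is the crux (Section~\ref{sec:conclusion}). The paper establishes irreducibility only for connected elements of $\Cb$ (Proposition~\ref{prop:irred-C}), exploiting their recursive structure via Lemma~\ref{lem:irred}; the general case remains open. One small correction: you write that the enumerator of a connected labeled rooted tree ``need not be irreducible,'' but no counterexample is known --- the accurate statement is that irreducibility is \emph{not yet proved} in this generality. Your outline would need both that irreducibility \emph{and} a resolution of the decoration ambiguity, the latter being a genuine second obstacle that the paper avoids by hypothesis rather than by argument.
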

 
 Hasebe and Tsujie \cite{HaTs17} have shown the case when all the relations are weak (or all strict), and we generalize their result by establishing Conjecture~\ref{con:labeled_rooted_trees} for a class of labeled rooted trees that we call \emph{fair trees}.  The class of fair trees interpolates between rooted trees with all weak edges and those with all strict edges.  We believe our result on fair trees (Theorem~\ref{thm:fair-trees}) is the first non-trivial result stating that $\kpw(\x)$ distinguishes a class of posets with a mixture of strict and weak edges.  We conclude in Section~\ref{sec:conclusion} with several directions for further study.

%%%%%%%%%%%%%%%%%%%%%%%%%
%%%%%%%%%%%%%%%%%%%%%%%%%
\section{Preliminaries}\label{sec:prelims}
%%%%%%%%%%%%%%%%%%%%%%%%%
%%%%%%%%%%%%%%%%%%%%%%%%%

%%%%%%%%%%%%%%%%%%%%%%%%%
\subsection{The chromatic symmetric function for directed graphs}
%%%%%%%%%%%%%%%%%%%%%%%%%

Let $\mathbb{P}$ denote the set of positive integers.  For a positive integer $n$, we write $[n]$ to denote the set $\{1,\ldots,n\}$.

\begin{definition}{\cite{Ell17+}}\label{def:xg}
Let $\diG = (V,E)$ be a directed graph.  Given a proper coloring $\kappa: V \to \mathbb{P}$ of $\diG$, we define an \emph{ascent} of $\kappa$ to be a directed edge $(v_i, v_j) \in E$ with $\kappa(v_i) < \kappa(v_j)$, and we let $\asc(\kappa)$ denote the number of ascents of $\kappa$.  The chromatic quasisymmetric function of $\diG$ is 
\begin{equation}\label{equ:xgt}
X_{\diG}(\x, t) = \sum_{\kappa} t^{\asc(\kappa)} x_1^{\# \kappa^{-1}(1)} x_2^{\# \kappa^{-1}(2)} \cdots
\end{equation}
where the sum is over all proper colorings of $\diG$.
\end{definition}

Setting $t=1$ yields Stanley's chromatic symmetric function $X_G(\x)$ of~\eqref{equ:stanley}, where $G$ denotes the undirected version of $\diG$.  When $\diG$ is a directed acyclic graph, which is the case of most interest to us, $X_{\diG}(\x, t)$ coincides with the chromatic quasisymmetric function of Shareshian and Wachs.  We use the digraph setting because in the labeled graph setting of Shareshian--Wachs, there are lots of trivial equalities among $X_G(\x, t)$ that result just from relabeling.

\begin{example}
Let $\diG$ be the 3-element path with the directions as shown below.  With colors $a<b<c$, the proper colorings $\kappa$ of  $\diG$ fall into the 8 classes given by the following table.
\[
\begin{tikzpicture}[scale=1.2] 
\tikzstyle{every node}=[draw, shape=circle, inner sep=1pt]; 
\draw (0,0) node (v1) {$v_1$};
\draw (1,0) node (v2) {$v_2$};
\draw (2,0) node (v3) {$v_3$};
\draw[->-=0.6] (v1) -- (v2);
\draw[->-=0.6] (v3) -- (v2);
\end{tikzpicture} 
\ \ \ \ \ \ \ \ \ \ \ 
\]
\smallskip
\[
\begin{array}{ccc|c}
\kappa(v_1) & \kappa(v_2) & \kappa(v_3) & \asc(\kappa) \\ \hline
a & b & c & 1\\
a & c & b & 2\\
b & a & c & 0\\
b & c & a & 2\\
c & a & b  & 0\\
c & b & a & 1\\
a & b & a & 2\\
b & a & b & 0
\end{array}
\]
Thus 
\[
X_{\diG}(\x,t) = (2 + 2t + 2t^2)M_{111} + t^2 M_{21} + M_{12}\,,
\]
where $M$ denotes the basis of monomial quasisymmetric functions (see Subsection~\ref{sub:qsym} for the necessary background on quasisymmetric functions).
Setting $t=1$ gives $X_G(\x) = 6m_{111} + m_{21}$ from which we get $\chi_G(k) =  6 \binom{k}{3} +k(k-1) = k(k-1)^2$, as expected.
\end{example}

Let us make a couple of observations about types of $X_{\diG}(\x,t)$-equality that arise.  By setting $t=1$, we know equal $X_{\diG}(\x,t)$ means the underlying undirected graphs must have equal $X_G(\x)$ and the examples in Figure~\ref{fig:equalxg} show two scenarios: either the underlying undirected graphs are isomorphic, or they are not isomorphic but have equal $X_G(\x)$.  The $X_G(\x)$-equality implied by Figure~\ref{fig:equalxg}(c) is the one given by Stanley in \cite{Sta95}. 

Figure~\ref{fig:equalxg}(a) shows an example of $X_{\diG}(\x, t)$ being invariant under reversal of all the edge directions.  Letting $\alpha^{\mathrm{rev}}$ denote the reversal of the composition $\alpha$, this invariance will hold whenever the coefficients of $M_\alpha$ and $M_{\alpha ^{\mathrm{rev}}}$ in $X_{\diG}(\x, t)$ are equal for all $\alpha$, so in particular when $X_{\diG}(\x, t)$ is symmetric \cite[Cor.~2.7]{ShWa16} \cite[Prop.~2.6]{Ell17+}. But not all equalities among $X_{\diG}(\x, t)$ with isomorphic underlying graphs arise from reversal of all edges, as shown by Figure~\ref{fig:equalxg}(b).

%%%%%%%%%%%%%%%%%%%%%%%%%
\subsection{The poset perspective}
%%%%%%%%%%%%%%%%%%%%%%%%%

As mentioned in the Introduction, when $\diG$ is a directed acyclic graph we can view it as a poset; see Figure~\ref{fig:dg2poset} for an example, with a coloring given by numbers next to each vertex. 

\begin{figure}
\begin{center}
\begin{tikzpicture}[scale=1.0]  
\begin{scope}
\begin{scope}
\tikzstyle{every node}=[draw, shape=circle, inner sep=2pt];
\draw (0,0) node (e) {$v_4$};
\draw (-1,1) node (b) {$v_1$};
\draw (1,1) node (d) {$v_5$};
\draw (0,2) node (c) {$v_2$};
\draw (-2.4,1) node (a) {$v_3$};
\draw[->-=0.55] (a) -- (b);
\draw[->-=0.55] (c) -- (b);
\draw[->-=0.55] (e) -- (b);
\draw[->-=0.55] (d) -- (e);
\draw[->-=0.55] (d) -- (c);
\draw[->-=0.55] (d) -- (b);
\end{scope}
\draw (0.45, 0) node {\textcolor{blue}{\textbf{2}}};
\draw (-2.4, 1.5) node {\textcolor{blue}{\textbf{1}}};
\draw (-1, 1.5) node {\textcolor{blue}{\textbf{5}}};
\draw (0.45, 2) node {\textcolor{blue}{\textbf{3}}};
\draw (1, 1.5) node {\textcolor{blue}{\textbf{1}}};
\draw (0,-1) node {(a)};
\end{scope}
\begin{scope}[xshift=10em]
\begin{scope}
\tikzstyle{every node}=[draw, shape=circle, inner sep=2pt];
\draw (0,0) node (d) {$v_5$};
\draw (-1,1) node (e) {$v_4$};
\draw (1,1) node (c) {$v_2$};
\draw (0,2) node (b) {$v_1$};
\draw (2,1) node (a) {$v_3$};
\draw (d) -- (e) -- (b) -- (a);
\draw (d) -- (c) -- (b);
\end{scope}
\draw (0.45, 0) node {\textcolor{blue}{\textbf{1}}};
\draw (2, 1.5) node {\textcolor{blue}{\textbf{1}}};
\draw (-1, 1.5) node {\textcolor{blue}{\textbf{2}}};
\draw (-0.45, 2) node {\textcolor{blue}{\textbf{5}}};
\draw (1, 0.52) node {\textcolor{blue}{\textbf{3}}};
\draw (0,-1) node {(b)};
\end{scope}
\end{tikzpicture}
\end{center}
\caption{Converting from a proper coloring of a digraph to a strict $P$-partition.  The numbers next to each node correspond to a coloring in the digraph on the left and the corresponding $(P,\om)$-partition of the labeled poset on the right.}
\label{fig:dg2poset}
\end{figure}
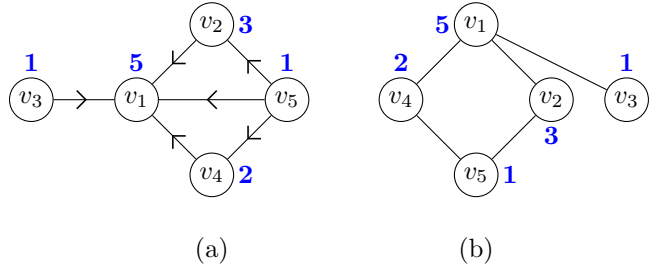

Now consider the coefficient of the highest power of $t$ that appears in $X_{\diG}(\x, t)$.  This coefficient enumerates colorings that strictly increase along every directed edge, as in Figure~\ref{fig:dg2poset}(a).  We now compare this to the definition of Stanley's $(P,\om)$-partitions.

Let $P$ be a poset with $n$ elements; we write $|P|=n$.  Denote the order relation on $P$ by $\leq_P$, while $\leq$ denotes the usual order on the positive integers.  A \emph{labeling} of $P$ is a bijection $\om : P \to [n]$.  A \emph{labeled poset} $(P,\om)$ is then a poset $P$ with an associated labeling $\om$.  

\begin{definition}\label{def:popartition}
For a labeled poset $(P,\om)$, a \emph{$(P,\om)$-partition} is a map $f$ from $P$ to the positive integers satisfying the following two conditions:
\begin{itemize}
\item if $a <_P b$, then $f(a) \leq f(b)$, i.e., $f$ is order-preserving;
\item if $a <_P b$ and $\om(a) > \om(b)$, then $f(a) < f(b)$.
\end{itemize}
\end{definition}
In other words, a $(P,\om)$-partition is an order-preserving map from $P$ to the positive integers with certain strictness conditions determined by $\om$.  Examples of $(P,\om)$-partitions $f$ are given in Figure~\ref{fig:popartitions}, where the images under $f$ are written in bold and blue next to the nodes.  

The meaning of the double edges in the figure follows from the following observation about Definition~\ref{def:popartition}.  For $a, b \in P$, we say that $a$ is \emph{covered} by $b$ in $P$, denoted $a \prec_P b$, if $a <_P b$ and there does not exist $c$ in $P$ such that $a <_P c <_P b$.  Note that a definition equivalent to Definition~\ref{def:popartition} is obtained by replacing both appearances of the relation $a <_P b$ with the relation $a \prec_P b$.  In other words, we require that $f$ be order-preserving along the edges of the Hasse diagram of $P$, with $f(a) < f(b)$ when the edge $a \prec_P b$ satisfies $\om(a) > \om(b)$.  With this in mind, we will consider those edges $a \prec_P b$ with $\om(a) > \om(b)$ as \emph{strict edges} and we will represent them in Hasse diagrams by double lines.  Similarly, edges $a \prec_P b$ with $\om(a) < \om(b)$ will be called \emph{weak edges} and will be represented by single lines.  

From the point-of-view of $(P,\om)$-partitions, the labeling $\om$ only determines which edges are strict and which are weak.  Therefore, we say that two labeled posets $(P,\om)$ and $(Q,\om')$ are \emph{isomorphic} if $P$ and $Q$ are isomorphic as posets and they have equivalent sets of strict and weak edges according to a poset isomorphism.  Thus many of our figures from this point on will not show the labeling $\om$, but instead show some collection of strict and weak edges determined by an underlying $\om$.  
\begin{center}
\begin{figure}[htbp]
\begin{tikzpicture}[scale=0.7]
\begin{scope}
\begin{scope}
\tikzstyle{every node}=[shape=circle, inner sep=2pt]; 
\draw (0,0) node[draw] (a1) {1} +(0.6,-0.3) node {\textcolor{blue}{\textbf{4}}};
\draw (0,1.5) node[draw] (a4) {4} +(0.0,0.65) node {\textcolor{blue}{\textbf{3}}};
\draw (-1.5,2.5) node[draw] (a2) {2} +(-0.6,-0.3) node {\textcolor{blue}{\textbf{4}}};
\draw (1.5,2.5) node[draw] (a3) {3} +(0.6,-0.3) node {\textcolor{blue}{\textbf{7}}};
\draw (a2) -- (a1) -- (a3);
\draw[double distance=2pt] (a3) --(a4) -- (a2);
\end{scope}
\draw (0,-1.3) node {(a)};
\end{scope}

\begin{scope}[xshift=15em]
\begin{scope}
\tikzstyle{every node}=[shape=circle, inner sep=2pt]; 
\draw (0,0) node[draw] (a1) {1} +(0.6,-0.3) node {\textcolor{blue}{\textbf{4}}};
\draw (0,1.5) node[draw] (a4) {2} +(0.0,0.65) node {\textcolor{blue}{\textbf{3}}};
\draw (-1.5,2.5) node[draw] (a2) {3} +(-0.6,-0.3) node {\textcolor{blue}{\textbf{4}}};
\draw (1.5,2.5) node[draw] (a3) {4} +(0.6,-0.3) node {\textcolor{blue}{\textbf{4}}};
\draw (a2) -- (a1) -- (a3);
\draw(a3) --(a4) -- (a2);
\end{scope}
\draw (0,-1.3) node {(b)};
\end{scope}

\begin{scope}[xshift=30em]
\begin{scope}
\tikzstyle{every node}=[shape=circle, inner sep=2pt]; 
\draw (0,0) node[draw] (a1) {4} +(0.6,-0.3) node {\textcolor{blue}{\textbf{1}}};
\draw (0,1.5) node[draw] (a4) {3} +(0.0,0.65) node {\textcolor{blue}{\textbf{3}}};
\draw (-1.5,2.5) node[draw] (a2) {1} +(-0.6,-0.3) node {\textcolor{blue}{\textbf{4}}};
\draw (1.5,2.5) node[draw] (a3) {2} +(0.6,-0.3) node {\textcolor{blue}{\textbf{7}}};
\draw[double distance=2pt]  (a2) -- (a1) -- (a3);
\draw[double distance=2pt] (a3) --(a4) -- (a2);
\end{scope}
\draw (0,-1.3) node {(c)};
\end{scope}
\end{tikzpicture}
\caption{Examples of $(P,\om)$-partitions}
\label{fig:popartitions}
\end{figure}
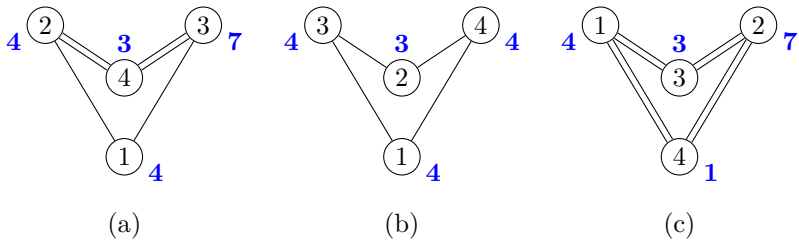
\end{center}

If $\om$ is order-preserving, as in Figure~\ref{fig:popartitions}(b), then $P$ is said to be \emph{naturally labeled} and all the edges are weak.  In this case, we typically omit reference to the labeling and so a $(P,\om)$-partition is then traditionally called a $P$-partition.  At the other extreme, when $\om$ is order-reversing, as in Figure~\ref{fig:popartitions}(c), $f$ must strictly increase along each edge. Such $(P,\om)$-partitions that are required to strictly increase along each edge will be called \emph{strict $P$-partitions}.  

Returning to Figure~\ref{fig:dg2poset}, the key observation is now clear: the proper colorings of a directed acyclic graph $\diG$ that contribute to the coefficient of the highest power of $t$ in $X_{\diG}(\x,t)$ are in bijection with strict $P$-partitions of the corresponding poset $P$.  

To make the algebraic connection, the well-known $(P,\om)$-partition enumerator is defined by 
\begin{equation}\label{equ:kp}
\kpw(\x) = \sum_{f} x_1^{\#f^{-1}(1)} x_2^{\# f^{-1}(2)} \cdots
\end{equation}
where the sum is over all $(P,\om)$-partitions $f:P \to \mathbb{P}$.  When all the edges of $(P,\om)$ are weak, so the sum is over $P$-partitions, we will denote the $P$-partition enumerator $\kpw(\x)$ simply by $K_P(\x)$ or just $K_P$.  Similarly, we will use $\KK{P}$ to denote $\kpw(\x)$ when all the edges are strict, thus enumerating strict $P$-partitions.  Comparing~\eqref{equ:xgt} and~\eqref{equ:kp} when $P$ is the poset corresponding to a directed acyclic graph $\diG$, we see that $\KK{P}(\x)$  is exactly the coefficient of the highest power of $t$ in $X_{\diG}(\x,t)$.   This connection between $\KK{P}$ and $X_{\diG}(\x, t)$ has previously been mentioned in \cite[Theorem~7.4]{AlSu21} \cite[p.~11]{Ell17+}.  As a corollary, if Conjecture~\ref{con:poset_trees} is true, then so is Conjecture~\ref{con:chromatic_trees}.

With this implication now established, we will work almost entirely in the poset setting.  Although the direct connection between $X_{\diG}(\x, t)$ and $\kpw$ uses the setting of strict $P$-partitions and $\KK{P}$, most of the results in the literature work with $P$-partitions and $K_P$.  However, for equality questions, the two settings are equivalent since $\KK{P}$ can be obtained from $K_P$ and vice-versa; see \cite[\S 3]{McWa14} for the full details of this equivalence and involutions on $(P,\om)$-partition enumerators.  In particular, Conjecture~\ref{con:poset_trees} can be restated as the assertion that $K_P(\x)$ distinguishes posets that are trees.

\begin{remark}
In addition to its simple statement, Conjecture~\ref{con:poset_trees} has the virtue that some natural more general statements are false.  The first example of non-isomorphic posets with the same $K_P$ was given in \cite{McWa14} and appears in Figure~\ref{fig:equal_kp}(a).  A $\emph{bowtie}$ is the poset consisting of elements $a_1,a_2,b_1,b_2$ with cover relations $a_i < b_j$ for all $i,j$.  Notice that each poset in Figure~\ref{fig:equal_kp}(a) has a bowtie as an induced subposet.  Otherwise, we say the poset is \emph{bowtie-free}.  Weakening the tree hypothesis of Conjecture~\ref{con:poset_trees} to bowtie-free results in a false statement, with Figure~\ref{fig:equal_kp}(b) being the smallest counterexample.
\begin{figure}
\begin{center}
\begin{tikzpicture}[scale=1.0] 

\begin{scope}
\tikzstyle{every node}=[draw, shape=circle, inner sep=2pt]; 
\begin{scope}
\draw (0,0) node (a1) {};
\draw (1,0) node (a2) {};
\draw (0,1) node (a3) {};
\draw (1,1) node (a4) {};
\draw (-1,1) node (a5) {};
\draw (0,2) node (a6) {};
\draw (1,2) node (a7) {};
\draw (a1) -- (a5) -- (a7) -- (a4) -- (a2) -- (a6) -- (a3) -- (a1);
\end{scope}
\begin{scope}[yshift=18ex]
\draw (0,0) node (a1) {};
\draw (1,0) node (a2) {};
\draw (0,1) node (a3) {};
\draw (1,1) node (a4) {};
\draw (-1,1) node (a5) {};
\draw (0,2) node (a6) {};
\draw (1,2) node (a7) {};
\draw (a1) -- (a5) -- (a6) -- (a3) -- (a1) -- (a7) -- (a4) -- (a2) -- (a6);
\end{scope}
\end{scope}
\begin{scope}
\draw (0.5,-0.75) node {(a)};
\end{scope} 

\begin{scope}[xshift=25ex]
\begin{scope}
\tikzstyle{every node}=[draw, shape=circle, inner sep=2pt]; 
\begin{scope}
\draw (0,0) node (a1) {};
\draw (1,0) node (a2) {};
\draw (0,1) node (a3) {};
\draw (1,1) node (a4) {};
\draw (-1,1) node (a5) {};
\draw (0,2) node (a6) {};
\draw (1,2) node (a7) {};
\draw (2,0) node (a8) {};
\draw (2,1) node (a9) {};
\draw (2,2) node (a10) {};
\draw (a1) -- (a5) -- (a10) -- (a9) -- (a8) -- (a7) -- (a4) -- (a2) -- (a6) -- (a3) -- (a1);
\end{scope}
\begin{scope}[yshift=18ex] 
\draw (0,0) node (a1) {};
\draw (1,0) node (a2) {};
\draw (0,1) node (a3) {};
\draw (-1,1) node (a5) {};
\draw (0,2) node (a6) {};
\draw (1,2) node (a7) {};
\draw (2,0.6) node (a8) {};
\draw (2,1.4) node (a9) {};
\draw (3,0) node (a10) {};
\draw (3,2) node (a11) {};
\draw (a1) -- (a5) -- (a6) -- (a2) -- (a9) -- (a11) -- (a10) -- (a8) -- (a7) -- (a1);
\draw (a1) -- (a3) -- (a6);
\end{scope}
\end{scope}
\begin{scope}
\draw (1,-0.75) node {(b)};
\end{scope}
\end{scope}
\end{tikzpicture}
\end{center}
\caption{Pairs of posets with equal $P$-partition enumerators}
\label{fig:equal_kp}
\end{figure}
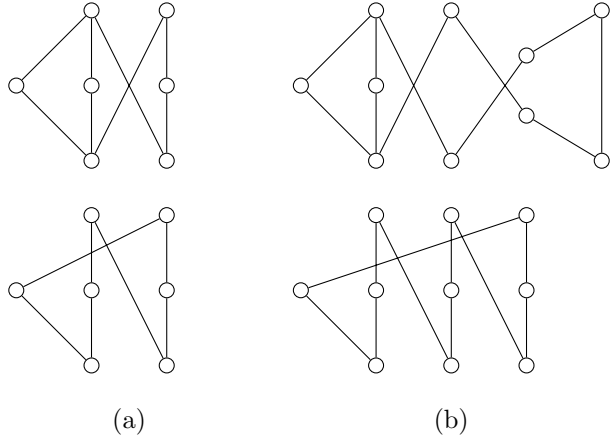

Conjecture~\ref{con:poset_trees} equivalently states that for posets $P$ and $Q$ that are trees, if $\KK{P} = \KK{Q}$ then $P$ and $Q$ are isomorphic.  We can consider weakenings of the equality hypothesis in this statement.  The \emph{$F$-support} of a quasisymmetric function $f$ is the set of compositions $\alpha$ for which the coefficient of $F_\alpha$ is non-zero when $f$ is expanded in the $F$-basis.  But the $F$-support of $\KK{P}$ and  $\KK{Q}$ being equal does not imply that $P$ and $Q$ are isomorphic, with a counterexample given by the posets in Figure~\ref{fig:f_support}, which both have $F$-support $\{221, 212, 122, 2111, 1211, 1121, 1112, 11111\}$.

In contrast, see Subsection~\ref{sub:ps} for versions of Conjecture~\ref{con:poset_trees} that use much less information than in $\KK{P}(\x)$ to distinguish posets that are trees.

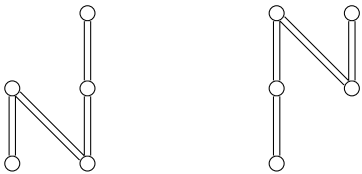
\begin{figure}
\begin{center}
\begin{tikzpicture}[scale=1.0] 
\tikzstyle{every node}=[draw, shape=circle, inner sep=2pt]; 
\begin{scope}  
\draw (1,0) node (a1) {};
\draw (0,0) node (a2) {};
\draw (1,1) node (a3) {};
\draw (0,1) node (a4) {};
\draw (1,2) node (a5) {};
\draw[double distance=2pt] (a2) -- (a4) -- (a1) -- (a3) -- (a5);
\end{scope}  
\begin{scope}[xshift=10em] 
\draw (0,0) node (a1) {};
\draw (0,1) node (a2) {};
\draw (1,1) node (a3) {};
\draw (0,2) node (a4) {};
\draw (1,2) node (a5) {};
\draw[double distance=2pt] (a1) -- (a2) -- (a4) -- (a3) -- (a5);
\end{scope}
\end{tikzpicture}
\end{center}
\caption[caption]{The strict $P$-partition enumerators of these two posets have the same $F$-support}
\label{fig:f_support}
\end{figure}
\end{remark}

%%%%%%%%%%%%%%%%%%%%%%%%%
\subsection{Quasisymmetric functions}\label{sub:qsym}
%%%%%%%%%%%%%%%%%%%%%%%%%

It follows directly from its definition that $\kpw$ is a quasisymmetric function.  In fact, $\kpw$ served as a motivating example for Gessel's original definition \cite{Ges84} of quasisymmetric functions.   

For our purposes, quasisymmetric functions are elements of $\mathbb{Z}[[x_1, x_2, \ldots]]$ and we denote the ring of quasisymmetric functions by \qsym. We will make use of both of the classical bases for \qsym.  If $\alpha = (\alpha_1, \alpha_2, \ldots, \alpha_k)$ is a composition of $n$, then we define the \emph{monomial quasisymmetric function} $M_\alpha$ by
\[
M_\alpha = \sum_{i_1 < i_2 < \ldots < i_k} x_{i_1}^{\alpha_1} x_{i_2}^{\alpha_2} \cdots x_{i_k}^{\alpha_k}.
\]

As we know, compositions $\alpha = (\alpha_1, \alpha_2, \ldots, \alpha_k)$ of $n$ are in bijection with subsets of $[n-1]$, and let $S(\alpha)$ denote the set
$\{\alpha_1, \alpha_1+\alpha_2, \ldots, \alpha_1+\alpha_2 + \cdots+ \alpha_{k-1}\}$.  Thus we also denote $M_\alpha$ by $M_{S(\alpha), n}$.  Notice that these two notations are distinguished by the latter one including the subscript $n$; this subscript is helpful since $S(\alpha)$ does not uniquely determine $n$.  

The second classical basis is composed of the \emph{fundamental quasisymmetric functions} $F_\alpha$ defined by 
\begin{equation}\label{equ:F2M}
F_\alpha = F_{S(\alpha), n} = \sum_{S(\alpha) \subseteq T  \subseteq [n-1]} M_{T,n}\,.
\end{equation}
The relevance of this latter basis to $\kpw$ is due to Theorem~\ref{theo:KinF} below, which first appeared in \cite{StaThesis71,StaThesis} and, in the language of quasisymmetric functions, in \cite{Ges84}.  

Every permutation $\pi \in S_n$ has a descent set $\des(\pi)$ given by $\{i \in [n-1] : \pi(i) > \pi(i+1)\}$, and we will call the corresponding composition of $n$ the \emph{descent composition} of $\pi$, denoted $\co(\pi)$.  For example, if $\pi = 243561$, then $\des(\pi) = \{2,5\}$ and $\co(\pi) = 231$.  Let $\L(P,\om)$ denote the set of all linear extensions of $P$, regarded as permutations of the $\om$-labels of $P$.  For example, for the labeled poset in Figure~\ref{fig:popartitions}(a), $\L(P,\om) = \{1423, 1432, 4123, 4132\}$.  

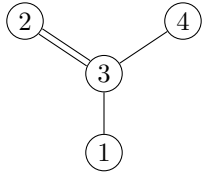
\begin{figure}
\begin{center}
\begin{tikzpicture}[scale=0.7]
\tikzstyle{every node}=[shape=circle, inner sep=2pt]; 
\draw (0,0) node[draw] (a1) {1};
\draw (0,1.5) node[draw] (a3) {3};
\draw (-1.5,2.5) node[draw] (a2) {2};
\draw (1.5,2.5) node[draw] (a4) {4};
\draw (a1) -- (a3) -- (a4);
\draw[double distance=2pt] (a3) -- (a2);
\end{tikzpicture}
\caption{The labeled poset of Example~\ref{exa:kexpansion}}
\label{fig:labeled_poset}
\end{center}
\end{figure}

\begin{theorem}[\cite{Ges84,StaThesis71,StaThesis}]\label{theo:KinF}
Let $(P,\om)$ be a labeled poset with $|P|=n$.  Then
\[
\kpw = \sum_{\pi \in \L(P,\om)} F_{\des(\pi), n} = \sum_{\pi \in \L(P,\om)} F_{\co(\pi)}\,.
\]
\end{theorem}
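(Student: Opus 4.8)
The plan is to prove the first equality; the second is then immediate, since $F_{\des(\pi),n}=F_{\co(\pi)}$ by the two notations for the fundamental basis. I would group the $(P,\om)$-partitions appearing in the defining sum~\eqref{equ:kp} for $\kpw$ according to a single linear extension to which each partition is naturally attached, and then recognize the total contribution of each group as one fundamental quasisymmetric function. To set this up, for a linear extension $w=w_1w_2\cdots w_n$ of $P$ (a listing of the elements compatible with $<_P$, whose associated permutation of labels is $\pi=\om(w_1)\cdots\om(w_n)$), call a map $f:P\to\mathbb{P}$ \emph{$\pi$-compatible} if $f(w_1)\le f(w_2)\le\cdots\le f(w_n)$ and moreover $f(w_i)<f(w_{i+1})$ whenever $i\in\des(\pi)$, i.e.\ whenever $\om(w_i)>\om(w_{i+1})$. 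The heart of the argument is the claim that the set of $(P,\om)$-partitions is the disjoint union, over $\pi\in\L(P,\om)$, of the sets of $\pi$-compatible maps, and this is the step I expect to be the main obstacle.

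For the forward inclusion I would show that each $(P,\om)$-partition $f$ is $\pi$-compatible for \emph{exactly one} linear extension, by building the candidate $w$ explicitly: list the elements of $P$ in weakly increasing order of $f$-value, breaking ties among elements of equal $f$-value in increasing order of $\om$-label (well defined since $\om$ is a bijection). Then I check three things. First, $w$ is a linear extension: if $a<_P b$ then $f(a)\le f(b)$ by order-preservation, and in the case $f(a)=f(b)$ the strictness axiom of Definition~\ref{def:popartition} forces $\om(a)<\om(b)$, so in either case $a$ precedes $b$. Second, $f$ is $\pi$-compatible: the $f$-values weakly increase by construction, and a descent $\om(w_i)>\om(w_{i+1})$ cannot occur inside a block of equal $f$-value (there the $\om$-labels increase), so it forces $f(w_i)<f(w_{i+1})$. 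Third, uniqueness: any $\pi$-compatible listing must sort the elements into the same contiguous blocks of equal $f$-value, and within each block a descent is forbidden, forcing the $\om$-labels to increase there, which pins down the order inside each block; hence the listing coincides with $w$.

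For the reverse inclusion I would verify that any $\pi$-compatible $f$ with $\pi\in\L(P,\om)$ is genuinely a $(P,\om)$-partition. Order-preservation is immediate, since $a<_P b$ forces $a$ before $b$ in $w$ and $f$ is weakly increasing along $w$. The strictness condition is the only delicate point: suppose $a<_P b$ with $\om(a)>\om(b)$ and write $a=w_i$, $b=w_j$ with $i<j$. If $f$ were constant on $w_i,\ldots,w_j$ then no descent could occur among these consecutive positions, forcing $\om(w_i)<\om(w_{i+1})<\cdots<\om(w_j)$ and contradicting $\om(a)>\om(b)$; hence $f(a)<f(b)$. This telescoping observation completes the disjoint-union decomposition.

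Finally I would compute the generating function of a single class. Summing the weight $x_1^{\#f^{-1}(1)}x_2^{\#f^{-1}(2)}\cdots=\prod_{i=1}^n x_{f(w_i)}$ over all $\pi$-compatible $f$ amounts to summing $x_{b_1}x_{b_2}\cdots x_{b_n}$ over all weakly increasing sequences $1\le b_1\le\cdots\le b_n$ that strictly increase at every position of $\des(\pi)$. Classifying such sequences by the exact set $T\supseteq\des(\pi)$ of their strict ascents, and recalling that $M_{T,n}$ enumerates precisely the weakly increasing sequences whose strict ascents are exactly $T$, this sum equals $\sum_{\des(\pi)\subseteq T\subseteq[n-1]}M_{T,n}=F_{\des(\pi),n}$ by~\eqref{equ:F2M}. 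Summing over $\pi\in\L(P,\om)$ and invoking the decomposition then yields $\kpw=\sum_{\pi\in\L(P,\om)}F_{\des(\pi),n}$, as desired.
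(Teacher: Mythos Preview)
Your proof is correct and is in fact the classical argument going back to Stanley's thesis: decompose the set of $(P,\om)$-partitions into disjoint ``compatibility classes'' indexed by linear extensions, and identify the generating function of each class as a single fundamental quasisymmetric function. All the checks you outline (that the canonical sorting by $f$-value with ties broken by $\om$-label yields a linear extension, that compatibility is unique, and the telescoping argument for the strictness condition in the reverse inclusion) are the standard ones and go through as you describe.

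Note, however, that the paper does \emph{not} give its own proof of Theorem~\ref{theo:KinF}: it is stated as a known result with citations to \cite{Ges84,StaThesis71,StaThesis} and then used as a tool throughout. So there is no in-paper proof to compare against; your write-up is essentially a reconstruction of the original proof from those references.
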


\begin{example}\label{exa:kexpansion}
The labeled poset $(P,\om)$ of Figure~\ref{fig:labeled_poset} has $\L(P,\om) = \{1324, 1342\}$ and hence
\begin{align*}
\kpw ={}& F_{\{2\},4} + F_{\{3\},4} \\
={}&  F_{22} + F_{31} \\ 
= \begin{split}  {}&(M_{\{2\},4} + M_{\{1,2\},4} + M_{\{2,3\},4} + M_{\{1,2,3\},4}) \\{}& + (M_{\{3\},4} + M_{\{1,3\},4} + M_{\{2,3\},4} + M_{\{1,2,3\},4}) \end{split} \\
={}&  M_{22} + M_{31} + M_{112}  + 2M_{211} + M_{121} + 2M_{1111}.
\end{align*}
\end{example}

%%%%%%%%%%%%%%%%%%%%%%%%%
%%%%%%%%%%%%%%%%%%%%%%%%%
\section{Consequences of the poset viewpoint}\label{sec:poset_results}
%%%%%%%%%%%%%%%%%%%%%%%%%
%%%%%%%%%%%%%%%%%%%%%%%%%

As mentioned in the Introduction, an advantage of the poset viewpoint is that equality among $K_P$ has already been studied in the literature.  In this largely expository section, we gather together these results, especially those relevant to Conjecture~\ref{con:poset_trees}.  As the $K_P$ notation indicates, these results are confined to posets with all weak edges.

Results from the literature mostly fall into three classes: irreducibility, classes of posets within which we know that the $P$-partition enumerator distinguishes the posets, and necessary conditions on $P$ and $Q$ for $K_P = K_Q$.  

%%%%%%%%%%%%%%%%%%%%%%%%%
\subsection{Irreducibility}
%%%%%%%%%%%%%%%%%%%%%%%%%

If $P$ disconnects into two posets $P_1$ and $P_2$, then it follows from the definition of $K_P$ that $K_P = K_{P_1}K_{P_2}$.  On the other hand, a key result from \cite{LiWe21} is that if $P$ is connected, then $K_P$ is irreducible in \qsym.  Moreover, \qsym\ is known to be a unique factorization domain \cite{Haz01,LaPy08,MaRe95}.  As a consequence, Liu and Weselcouch deduce the following result.

\begin{corollary}\cite[Corollary~4.20]{LiWe21}\label{cor:factors}
For a poset $P$, the irreducible factorization of $K_P$ is given by $K_P = \prod_i K_{P_i}$, where the $P_i$ are the connected components of $P$.
\end{corollary}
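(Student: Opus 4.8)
The plan is simply to assemble the three ingredients recorded in the paragraph preceding the statement. First I would decompose $P$ into its connected components $P_1, \ldots, P_k$. The definition of $K_P$ gives the multiplicativity $K_P = K_{P_1}K_{P_2}$ whenever $P$ is a disjoint union of $P_1$ and $P_2$; applying this repeatedly—peeling off one connected component at a time—yields $K_P = \prod_{i=1}^k K_{P_i}$. This is the candidate factorization, and everything that remains is to certify that it really is \emph{the} irreducible factorization.

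Next I would invoke the two cited structural facts. Since each $P_i$ is by construction connected, the theorem of Liu and Weselcouch guarantees that each factor $K_{P_i}$ is irreducible in $\qsym$; in particular each $K_{P_i}$ is a non-unit, so $\prod_i K_{P_i}$ is a genuine factorization of $K_P$ into irreducibles rather than a triviality. Finally, because $\qsym$ is a unique factorization domain, any expression of $K_P$ as a product of irreducibles is unique up to reordering and multiplication by units. Combining these, $\prod_i K_{P_i}$ must be the irreducible factorization of $K_P$, which is exactly the claim.

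I do not expect any genuine obstacle here, since the substantive mathematics—the irreducibility of $K_P$ for connected $P$ and the UFD property of $\qsym$—is precisely what is being imported from the cited literature, and the corollary is the routine combination of these with the elementary behavior of $K_P$ under disjoint unions. The only point I would take care to note is that a single-element component causes no difficulty: a one-element poset is connected, so the Liu--Weselcouch irreducibility statement applies to it as well, and hence even components that are single points contribute honest irreducible factors.
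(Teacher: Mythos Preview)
Your proposal is correct and follows precisely the argument the paper sketches in the paragraph immediately preceding the corollary: multiplicativity of $K_P$ over disjoint unions, irreducibility of $K_P$ for connected $P$ from \cite{LiWe21}, and the UFD property of \qsym. The paper does not give a separate formal proof beyond that paragraph, so there is nothing further to compare.
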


Therefore, when studying $K_P = K_Q$, it suffices to consider the case when both $P$ and $Q$ are connected (see \cite[Corollary~4.21]{LiWe21}).  Additionally, a proof of Conjecture~\ref{con:poset_trees} would also mean that $K_P$ distinguishes forests.

Returning briefly to the setting of the chromatic quasisymmetric function $X_{\diG}$ we get the following consequence.  

\begin{proposition}
For directed acyclic graphs $\diG$ and $\diH$ with $X_{\diG}(\x,t) = X_{\diH}(\x,t)$, if $\diG$ is connected then so is $\diH$.
\end{proposition}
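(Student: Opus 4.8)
The plan is to extract a single coefficient from the hypothesis $X_{\diG}(\x,t) = X_{\diH}(\x,t)$ and then invoke the irreducibility result of Corollary~\ref{cor:factors}. Let $P$ and $Q$ be the posets associated to $\diG$ and $\diH$. Since the two chromatic quasisymmetric functions are equal as polynomials in $t$ with coefficients in \qsym, their coefficients on the highest occurring power of $t$ must agree; by the discussion leading to \eqref{equ:kp}, these coefficients are exactly $\KK{P}$ and $\KK{Q}$. So the first step simply records that $\KK{P} = \KK{Q}$.

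Next I would establish the elementary dictionary that a directed acyclic graph is connected if and only if its associated poset is connected, i.e.\ has a connected Hasse diagram. In one direction, every edge of $\diG$ is a comparability of $P$, and for any poset comparability-connectedness coincides with Hasse-connectedness, since each relation $a <_P b$ is witnessed by a saturated chain of cover relations. For the converse, every cover relation of $P$ must in fact be a direct edge of $\diG$: otherwise an intermediate vertex on a directed path realizing $a <_P b$ would violate the covering condition. Hence the Hasse diagram of $P$ is a spanning subgraph of the underlying graph of $\diG$, so connectivity transfers. The same dictionary applies to $\diH$ and $Q$.

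With $\diG$ connected, hence $P$ connected, the heart of the argument is that $\KK{P}$ is irreducible in \qsym. Corollary~\ref{cor:factors} gives irreducibility of $K_P$ for connected $P$; I would transfer this to the strict enumerator via the equivalence between $K_P$ and $\KK{P}$ recorded in \cite[\S 3]{McWa14}, using that the conversion is realized by a ring automorphism of \qsym\ and therefore carries irreducibles to irreducibles. Note that $P$ is literally the same underlying poset for both $K_P$ and $\KK{P}$, so ``connected'' is unambiguous here. Finally, from $\KK{P} = \KK{Q}$ with $\KK{P}$ irreducible, $Q$ must be connected: a disjoint-union decomposition $Q = Q_1 \sqcup Q_2$ into nonempty posets would give the nontrivial factorization $\KK{Q} = \KK{Q_1}\,\KK{Q_2}$, contradicting irreducibility. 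Thus $Q$, and hence $\diH$, is connected.

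The main obstacle I anticipate is precisely this transfer of irreducibility from the weak setting to the strict setting: Corollary~\ref{cor:factors} is stated only for $K_P$, so I must justify carefully that the $K_P \leftrightarrow \KK{P}$ correspondence respects the factorization structure of the unique factorization domain \qsym. Everything else—the coefficient extraction from the $t$-polynomial and the connectivity dictionary between digraphs and posets—is routine.
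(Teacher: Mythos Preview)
Your argument is correct and follows essentially the same route as the paper: both extract the top $t$-coefficient, use that it is irreducible on the connected side, and derive a contradiction from the factorization on the (hypothetically) disconnected side. The only cosmetic difference is that the paper factors $X_{\diH}(\x,t)$ at the digraph level before taking coefficients, whereas you pass to $\KK{Q}$ first and factor at the poset level; your explicit treatment of the $K_P \leftrightarrow \KK{P}$ transfer via the ring automorphism is in fact more careful than the paper's proof, which silently writes $K_P$ for this coefficient.
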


\begin{proof}
If $\diH$ has connected components $\diH_1, \ldots, \diH_r$ with $r\geq 2$, then it follows from Definition~\ref{def:xg} that $X_{\diH}(\x,t)$ factors as 
\[
X_{\diH}(\x,t) = X_{\diH_1}(\x,t) \cdots X_{\diH_r}(\x,t).
\]
Thus
\begin{equation}\label{equ:connected_digraph}
X_{\diG}(\x,t) = X_{\diH_1}(\x,t) \cdots X_{\diH_r}(\x,t).
\end{equation}

Now consider the irreducibility of the coefficient of the highest power of $t$ on both sides of~\eqref{equ:connected_digraph}.  On the left-hand side, this is $K_P$ for the poset $P$ corresponding to $\diG$, and $K_P$ is irreducible since $\diG$ and hence $P$ is connected.  On the right-hand side, this coefficient is the product of the coefficients of the highest powers of $t$ in each $X_{\diH_i}(\x,t)$, which is a contradiction since we know these coefficients are not constants.  
\end{proof}

%%%%%%%%%%%%%%%%%%%%%%%%%
\subsection{Distinguishing within classes of posets}\label{sub:classes}
%%%%%%%%%%%%%%%%%%%%%%%%%

The $P$-partition enumerator $K_P$ is known to distinguish posets within each of the following classes.  
\begin{itemize}
\item Rooted trees \cite{HaTs17,Zho20+}, i.e., posets that are trees with a single minimal element.
\item More generally, posets that are both bowtie-free and \textsf{N}-free \cite{HaTs17}.  As one would expect, \textsf{N} is the poset consisting of elements $a_1,a_2,b_1,b_2$ whose cover relations are $a_1 < b_1 > a_2 < b_2$, and a poset is \textsf{N}-free if it does not contain \textsf{N} as an induced subposet.
\item Series-parallel posets \cite[Theorem~5.2]{LiWe21}.  These can be defined in two ways.  They are the posets that can be formed by repeated operations of disjoint union and ordinal sum.  Equivalently, they are the \textsf{N}-free posets. 
\item Posets of width two \cite{LiWe20}.
\item Posets whose Greene shape is a hook \cite{LiWe20}.  The \emph{Greene shape} of a poset $P$ is the partition $(c_1-c_0, c_2-c_1, \ldots)$ where $c_i$ is the maximum cardinality of a union of $k$ chains of $P$.  So a poset whose Greene shape is the hook $(j, 1^i)$ has a maximal chain with $j$ elements and $i$ additional elements which form an antichain. 
\item Posets with Greene shape $(k, 2, 1, 1, \ldots, 1)$ for some $k\geq2$ \cite{LiWe20}.
\end{itemize}

%%%%%%%%%%%%%%%%%%%%%%%%%
\subsection{Necessary conditions for equality}
%%%%%%%%%%%%%%%%%%%%%%%%%

If $K_P = K_Q$, then all the statements in the next list hold. In the same way that knowledge of $K_P$ is equivalent to knowledge of $\KK{P}$, both are equivalent to knowledge of $K_{P^*}$ where $P^*$ is the dual of $P$ (see, for example, \cite[\S 3]{McWa14}).  Thus all the statements below have dual versions.

\begin{itemize}
\item $P$ and $Q$ obviously have the same number of vertices; if they are not trees, they need not have the same number of edges, as shown by Figure~\ref{fig:equal_kp}(a).
\item By Theorem~\ref{theo:KinF}, $P$ and $Q$ have the same number of linear extensions.
\item The \emph{jump} of an element $p$ of $P$ is defined to be the length (number of edges) of the longest chain from $p$ down to a minimal element of $P$.  Then $P$ and $Q$ have the same number of elements of jump $i$ for all $i$  \cite{McWa14}.  This can sometimes be a quick way to show that $K_P \neq K_Q$\,.

\item $K_{P_i} = K_{Q_i}$, where $P_i$ denotes the induced subposet of $P$ consisting of elements of jump at least $i$ \cite{McWa14}.  For example, with $i=1$, we get that $K_{P^-} = K_{Q^-}$, where $P^-$ denotes the result of deleting all the minimal elements from $P$.

\item The \emph{up-jump} of $p$ denotes the length of the longest chain from $p$ to a maximal element, and define the \emph{jump-pair} of $P$ to be (jump of $p$, up-jump of $p$).  Then for all $i$ and $j$, the number of elements with jump pair $(i,j)$ is the same for $P$ as for $Q$ \cite{LiWe20}.

\item Let $\anti_{k,i,j}(P)$ denote the number of $k$-element order ideals $I$ of $P$ such that $I$ has $i$ maximal elements and $P\setminus I$ has $j$ minimal elements.  Then $\anti_{k,i,j}(P) = \anti_{k,i,j}(Q)$ for all $i,j,k$ \cite{LiWe20}.

%Note: by a counting argument, we showed that knowing all values of $\anti_{k,i,j}(P)$ is not enough to determine $P$.

\item Summing over $j$ and $k$ in the previous item, we get that $P$ and $Q$ have the same number of antichains of each size, as conjectured in \cite{McWa14} and shown in \cite{LiWe20}.

\item Suppose that for some $k$ and $i$, $P$ has a unique order ideal $I_P$ of size $k$ with $i$ maximal elements, and similarly for an order ideal $I_Q$ of $Q$.  Then $K_{P\setminus I_P}$ can be determined from $K_P$ \cite[Corollary 3.6]{LiWe20} and hence $K_{P\setminus I_P} = K_{Q \setminus I_Q}$.

\item Summing over $j$ in $\anti_{k,i,j}(P)$ shows that the number of order ideals of size $k$ with $i$ maximal elements has to be the same for $P$ as for $Q$.  Then there are various ways we can combine the results above.  For example, the number of elements with principal order ideal of size $k$ and up-jump $j$ is the same for $P$ as for $Q$ \cite{LiWe20}.

\item The Greene shape of $P$ equals that of $Q$ \cite{LiWe20}.

\item A $P$-partition $f$ is \emph{pointed} if $f$ is surjective onto $[k]$ for some $k$ and $f^{-1}(i)$ has a unique minimal element for all $i$ with $1 \leq i \leq k$.  The weight of $f$ is the composition $\wt(f) = (\#f^{-1}(1), \#f^{-1}(2), \ldots)$.  The number of pointed $P$-partitions of any given weight is the same for $P$ and $Q$.  This follows immediately from the result of \cite{AlSu21} that if we expand $K_P$ in the (unnormalized) power sum basis $\psi_\alpha$ of type I, then the coefficient of $\psi_\alpha$ equals the number of pointed $P$-partitions of weight $\alpha$ .

%This potentially has helpful applications.   For example, $wt(f)_1 = 2$ for some $f$ means that $P$ has a minimal element $p_1$ that is covered by another element $p_2$ such that $p_1$ is the only element below $p_2$.  

%We also used it to give an alternative proof that $K_P$ distinguishes posets of width 2.

\item \cite[Lemma~4.10]{LiWe20} shows that any finite poset $P$ has a unique antichain $A$ of maximum size such that any other antichain of maximum size is contained in the order ideal $I(A)$ generated by $A$.  Let $P^-$ be the subposet consisting of elements less than $A$ in $P$.  Then since $K_{P^-}$ is determined by $K_P$ (also shown in \cite{LiWe20}), we must have $K_{P^-} = K_{Q^-}$.  Similarly for the subposet $P^+$ above $A$.
\end{itemize}

If one is given non-isomorphic trees $T_1$ and $T_2$, it is typically straightforward to find a result on the list above that will show that they have unequal $P$-partition enumerators.  However, the problem is that the result used will depend on $T_1$ and $T_2$, i.e., we don't have a systematic way.

Trees of rank one are difficult enough that they are a good test case for techniques; see Figure~\ref{fig:rank_one} for a non-isomorphic pair.  We can use $\anti_{k,1,j}(P)$ and $ \anti_{k,1,j}(P^*)$ to determine the degree sequences for the maximal and minimal elements, respectively; these match up in the figure.  To distinguish the pair in the figure, we can use pointed $P$-partitions: the tree on the right has a pointed $P$-partition of weight $(4,1,4,2)$ but the tree on the left does not.

\begin{figure}
\begin{center}
\begin{tikzpicture}[scale=0.7] 
\tikzstyle{every node}=[draw, shape=circle, inner sep=2pt]; 
\begin{scope}  
\draw (-2,0) node (a1) {};
\draw (-0.67,0) node (a2) {};
\draw (0.67,0) node (a3) {};
\draw (2,0) node (a4) {};
\draw (-3,2) node (b1) {};
\draw (-2,2) node (b2) {};
\draw (-1,2) node (b3) {};
\draw (0,2) node (b4) {};
\draw (1,2) node (b5) {};
\draw (2,2) node (b6) {};
\draw (3,2) node (b7) {};
\draw (a1) -- (b1);
\draw (a1) -- (b2);
\draw (a1) -- (b3) -- (a2);
\draw (b3) -- (a3) -- (b4);
\draw (a4) -- (b4);
\draw (a4) -- (b5);
\draw (a4) -- (b6);
\draw (a4) -- (b7);
\end{scope}  

\begin{scope}[xshift=22em] 
\draw (-2,0) node (a1) {};
\draw (-0.67,0) node (a2) {};
\draw (0.67,0) node (a3) {};
\draw (2,0) node (a4) {};
\draw (-3,2) node (b1) {};
\draw (-2,2) node (b2) {};
\draw (-1,2) node (b3) {};
\draw (0,2) node (b4) {};
\draw (1,2) node (b5) {};
\draw (2,2) node (b6) {};
\draw (3,2) node (b7) {};
\draw (a1) -- (b1);
\draw (a1) -- (b2);
\draw (a1) -- (b3) -- (a2);
\draw (a2) -- (b4) -- (a3);
\draw (a4) -- (b4);
\draw (a4) -- (b5);
\draw (a4) -- (b6);
\draw (a4) -- (b7);
\end{scope}
\end{tikzpicture}
\end{center}
\caption{Two trees of rank one with the same degree sequences but different $K_P$}
\label{fig:rank_one}
\end{figure}
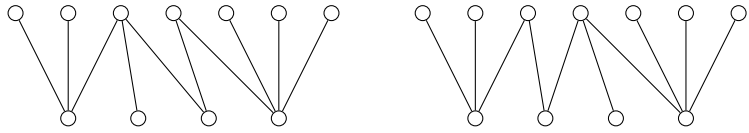

\section{Adding strict edges}\label{sec:strict_weak_edges}
%%%%%%%%%%%%%%%%%%%%%%%%%
%%%%%%%%%%%%%%%%%%%%%%%%%

This section considers extending Conjecture~\ref{con:poset_trees} in the following way, as inspired by \cite[Problem~6.2]{HaTs17}: does $\kpw$ distinguish labeled trees when we allow any mixture of strict and weak edges?  The answer is ``no'' in general as shown, for example, by the labeled trees in Figure~\ref{fig:trees_equal_kpw}.  

In fact, this question connects to a studied one in the realm of symmetric functions.  Semistandard Young tableaux of a skew shape $\lambda/\mu$ can be considered as $(P,\om)$-partitions of a particular labeled poset; see \cite[\S7.19]{Sta99}, for example.  In this case, $\kpw$ equals the skew Schur function $s_{\lambda/\mu}$\,.  When $\lambda/\mu$ is a \emph{ribbon}, meaning it is connected and has no 2-by-2 block of cells, the corresponding $(P,\om)$ will be a tree.  It is well known that $s_{\lambda/\mu}$ is invariant under rotation of $\lambda/\mu$ by 180$^\circ$ thus yielding an infinite class of pairs of trees with the same $\kpw$; the simplest example is in Figure~\ref{fig:trees_equal_kpw}(b).  It is natural to ask if they are other pairs of ribbons, unequal under 180$^\circ$ rotation, that give rise to the same skew Schur function. The answer is ``yes'' and the full classification of such pairs is given in \cite{BTvW06}.

On the other hand, Hasebe and Tsujie \cite{HaTs17} have shown that $\kpw$ distinguishes \emph{rooted} trees with all strict (equivalently all weak) edges.  In Conjecture~\ref{con:labeled_rooted_trees}, we propose that their result also holds for arbitrary labeled posets that are rooted trees.  In other words, if we want $\kpw$ to distinguish trees with a mixture of strict and weak edges, restricting to rooted trees works.  We have verified Conjecture~\ref{con:labeled_rooted_trees} for $n\leq 10$.

Although we have not succeeded in resolving Conjecture~\ref{con:labeled_rooted_trees}, the rest of this section focuses on results that still significantly generalize those in \cite{HaTs17}, as we next explain.   

\begin{figure}
\begin{center}
\begin{tikzpicture}[scale=0.7] 
\begin{scope}
\tikzstyle{every node}=[draw, shape=circle, inner sep=2pt]; 
\begin{scope}  
\draw (1,0) node (a1) {};
\draw (3,0) node (a2) {};
\draw (0,1) node (a3) {};
\draw (2,1) node (a4) {};
\draw (4,1) node (a5) {};
\draw (2,2.4) node (a6) {};
\draw (a5) -- (a2) -- (a4) -- (a6);
\draw (a1) -- (a3);
\draw[double distance=2pt] (a1) -- (a4);
\end{scope}  
\begin{scope}[yshift=-10em] 
\draw (2,0) node (a1) {};
\draw (0.4,1) node (a2) {};
\draw (2,1.4) node (a3) {};
\draw (3.6,1) node (a4) {};
\draw (0.4,2.4) node (a5) {};
\draw (3.6,2.4) node (a6) {};
\draw (a1) -- (a3);
\draw (a1) -- (a2) -- (a5);
\draw (a4) -- (a6);
\draw[double distance=2pt] (a1) -- (a4);
\end{scope}
\end{scope}
\begin{scope}[yshift=-13em] 
\draw (2,0) node {(a)};
\end{scope}

\begin{scope}[xshift=20em,yshift=-4em]
\begin{scope}  
\tikzstyle{every node}=[draw, shape=circle, inner sep=2pt]; 
\draw (1,0) node (a1) {};
\draw (0,1) node (a2) {};
\draw (2,1) node (a3) {};
\draw (a1) -- (a3);
\draw[double distance=2pt] (a1) -- (a2);

\begin{scope}[yshift=-6em] 
\draw (0,0) node (a1) {};
\draw (2,0) node (a2) {};
\draw (1,1) node (a3) {};
\draw (a1) -- (a3);
\draw[double distance=2pt] (a2) -- (a3);
\end{scope}
\end{scope}
\begin{scope}[yshift=-9em] 
\draw (1,0) node {(b)};
\end{scope}
\end{scope}
\end{tikzpicture}
\end{center}
\caption{Pairs of labeled trees with the same $\kpw$}
\label{fig:trees_equal_kpw}
\end{figure}
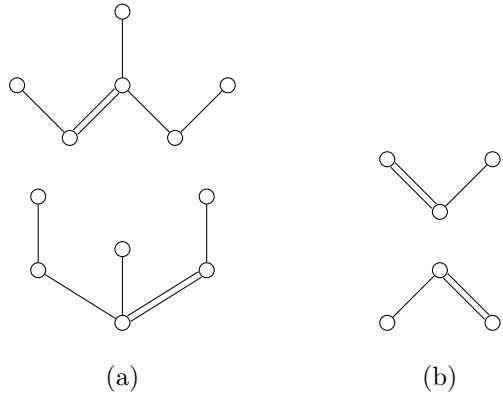

%%%%%%%%%%%%%%%%%%%%%%%%%
\subsection{Fair trees}
%%%%%%%%%%%%%%%%%%%%%%%%%

We consider rooted trees as being rooted at the bottom, and thus a child is above its parent. 

\begin{definition}
A {\em fair tree} is a labeled poset $(P,\om)$ such that:
\begin{itemize}
\item the underlying poset $P$ is a rooted tree,
\item for each vertex $v$ in $(P,\om)$, its outgoing edges (to its children) are either all strict or all weak.
\end{itemize} 
\end{definition}
Figure \ref{fig:fair-tree-ex} shows an example of a fair tree.  The ``fair'' terminology comes from the idea that each parent is equally strict with all its children.  

\begin{figure}
\begin{center}
\begin{tikzpicture}[scale=0.7] 
\tikzstyle{every node}=[draw, shape=circle, inner sep=2pt]; 

\draw (0,0) node (a1) {};

\draw (-2.5,1) node (a2) {};
\draw (0,1) node (a3) {};
\draw (2.5,1) node (a4) {};

\draw (-3,2) node (a5) {};
\draw (-2,2) node (a6) {};
\draw (-1,2) node (a7) {};
\draw (0,2) node (a8) {};
\draw (1,2) node (a9) {};
\draw (2.5,2) node (a10) {};

\draw (-2,3) node (a11) {};
\draw (2,3) node (a12) {};
\draw (3,3) node (a13) {};

\draw (a1) -- (a2);
\draw (a1) -- (a3);
\draw (a1) -- (a4);
\draw (a2) -- (a5);
\draw (a2) -- (a6);
\draw (a10) -- (a12);
\draw (a10) -- (a13);

\draw[double distance=2pt] (a6) --(a11);
\draw[double distance=2pt] (a3) --(a7);
\draw[double distance=2pt] (a3) --(a8);
\draw[double distance=2pt] (a3) --(a9);
\draw[double distance=2pt] (a4) --(a10);

\end{tikzpicture}
\end{center}
\caption{Example of a fair tree of size $13$}
\label{fig:fair-tree-ex}
\end{figure}
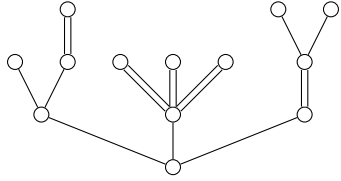

We will prove that the fair trees are distinguished by the $(P,\om)$-partition enumerator $\kpw$.  In fact, we shall consider a wider class $\Cb$ which we introduce next.  

Let us define the following (noncommutative) operations on labeled posets.
For $(P,\om)$ and $(Q,\om')$ two labeled posets (considered as posets with assignments of strict and weak edges),
the \emph{weak (resp.\ strict) ordinal sum} $(P,\om)\wP (Q,\om')$ (resp.\ $(P,\om)\sP (Q,\om')$) is the labeled poset obtained 
by placing $(P,\om)$ below $(Q,\om')$ 
and adding a weak (resp.\ strict) edge from each maximal element of $P$ to each minimal element of $Q$.  

It will be helpful in what follows to pick an explicit labeling of the elements of these ordinal sums that is consistent with the strictness of the edges.  We will label $(P,\om)\wP (Q,\om')$ by copying over the labels from $(P,\om)$ and $(Q,\om')$ but increasing each of the $\om'$-labels by $|P|$ so that the labeling is a bijection to $[|P|+|Q|]$.  We can label the disjoint union $(P,\om)\sqcup (Q,\om')$ in the same way.  Similarly, $(P,\om)\sP (Q,\om')$ will be labeled by instead increasing each of the $\om$-labels by $|Q|$.

\begin{definition}\label{def:C}
We define the set $\Cb$ of labeled posets recursively by:
\begin{enumerate}
\item the one-element labeled poset $[1]$ is in $\Cb$;
\item for any $(P,\om)$ and $(Q,\om')$ in $\Cb$, their disjoint union $(P,\om)\sqcup (Q,\om')$ is in $\Cb$;
\item for any $(P,\om)$ in $\Cb$, the ordinal sums
$[1]\wP (P,\om)$ and $[1]\sP (P,\om)$
are in $\Cb$;
\item for any $(P,\om)$ in $\Cb$, the ordinal sums
$(P,\om)\wP [1]$ and $(P,\om)\sP [1]$ 
are in $\Cb$.
\end{enumerate} 
\end{definition}
Figure \ref{fig:Cb-ex} shows an example.  See Subsection~\ref{sub:Cv2} for a characterization of $\Cb$ in terms of forbidden subposets.

\begin{figure}
\begin{center}
\begin{tikzpicture}[scale=0.7] 
\tikzstyle{every node}=[draw, shape=circle, inner sep=2pt]; 

\draw (0,-1) node (a1) {};

\draw (-5,-2) node (a2) {};
\draw (-1.5,-2) node (a3) {};

\draw (-6,-3) node (a4) {};
\draw (-5,-3) node (a5) {};
\draw (-4,-3) node (a6) {};
\draw (-2,-3) node (a7) {};
\draw (-1,-3) node (a8) {};
\draw (1,-3) node (a9) {};
\draw (3,-3) node (a10) {};
\draw (4,-3) node (a11) {};

\draw (-5.5,-4) node (a12) {};
\draw (3.5,-4) node (a13) {};

\draw (-3,-5) node (a14) {};

\draw  (5.5, -3.5) node (a15) {};
\draw  (5.5, -2.5) node (a16) {};

\draw (a1) -- (a2);
\draw (a1) -- (a3);
\draw (a1) -- (a9);
\draw (a1) -- (a10);
\draw (a1) -- (a11);
\draw (a3) -- (a7);
\draw (a3) -- (a8);
\draw (a4) -- (a12);
\draw (a5) -- (a12);

\draw[double distance=2pt] (a2) --(a4);
\draw[double distance=2pt] (a2) --(a5);
\draw[double distance=2pt] (a2) --(a6);
\draw[double distance=2pt] (a10) --(a13);
\draw[double distance=2pt] (a11) --(a13);
\draw[double distance=2pt] (a12) --(a14);
\draw[double distance=2pt] (a6) --(a14);
\draw[double distance=2pt] (a7) --(a14);
\draw[double distance=2pt] (a8) --(a14);
\draw[double distance=2pt] (a15) --(a16);

\end{tikzpicture}
\end{center}
\caption{Example of an element of $\Cb$ of size $16$}
\label{fig:Cb-ex}
\end{figure}
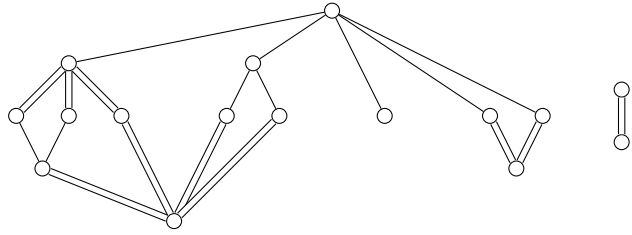

\begin{remark}
We can use Definition~\ref{def:C} to give an alternative and recursive definition of fair trees.
Define {\em fair forests} as the class defined recursively by (a)--(c) in Definition \ref{def:C}.  Fair trees are nothing but connected fair forests, thus are elements of $\Cb$.
\end{remark}

The main result in this section is the following.
It is a generalization of Theorems~1.3 and~5.1 in \cite{HaTs17}; see Proposition~\ref{prop:Cb-pat} below for the analogue of Hasebe and Tsujie's bowtie- and \textsf{N}-free characterization.

\begin{theorem}\label{thm:fair-trees}
The $(P,\om)$-partition enumerator $\kpw$ distinguishes elements in $\Cb$, 
thus in particular fair trees. More formally, for labeled posets $(P,\om)$ and $(Q,\om')$ in $\Cb$,
the following assertions are equivalent:
\begin{enumerate}
\item $(P,\om)$ and $(Q,\om')$ are isomorphic;
\item $\kpw=K_{(Q,\om')}$.
\end{enumerate}
\end{theorem}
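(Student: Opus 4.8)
The plan is to prove the nontrivial implication $(2)\Rightarrow(1)$ by induction on $|P|$, exploiting the recursive structure of $\Cb$ from Definition~\ref{def:C}. The overall strategy mirrors that of Hasebe and Tsujie: I want to show that from $\kpw$ one can recover enough structural data to reconstruct $(P,\om)$ up to isomorphism within the class $\Cb$. The base case $|P|=1$ is immediate. For the inductive step, the first task is to detect, from $\kpw$ alone, whether $(P,\om)$ is connected (a disjoint union) or arises from one of the ordinal-sum operations (c) or (d), and in the latter case whether the newly attached $[1]$ sits at the bottom (case (c)) or the top (case (d)) and whether its edges are strict or weak. Since $\Cb$ is not closed under taking arbitrary induced subposets, I must be careful to argue that the recovered ``peeled'' poset again lies in $\Cb$, so that the inductive hypothesis applies.

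The key idea is to use the necessary conditions for equality collected in Subsection~\ref{sub:classes}, together with the $F$-expansion of Theorem~\ref{theo:KinF}. Concretely, I first reduce to the connected case: by Corollary~\ref{cor:factors}, $\kpw$ determines the multiset of connected components, and an element of $\Cb$ is a disjoint union of its components, each of which (being connected and built by (a),(c),(d)) lies in $\Cb$; so I may assume both $(P,\om)$ and $(Q,\om')$ are connected. A connected element of $\Cb$ is obtained from a smaller element by a single operation (c) or (d), i.e.\ by attaching one new extremal vertex $v$. The heart of the argument is to read off from $\kpw$ the four invariants of this last attachment: top-vs-bottom and strict-vs-weak. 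I expect to detect top-vs-bottom by comparing the jump statistics of $P$ and of its dual $P^*$ (both computable from $\kpw$ by the jump result of \cite{McWa14} and its dual), since a rooted tree has a unique minimal element whose behavior is asymmetric under dualization. To detect strict-vs-weak and to isolate the enumerator of the peeled poset, I plan to use the $F$-support and the structure of linear extensions: attaching a strict versus a weak edge shifts descent sets in a controlled way, which should be visible after the appropriate manipulation of the $F$-expansion.

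Once the last operation is identified, I invoke a deletion lemma showing that $K_{(P',\om')}$, where $(P',\om')$ is the poset with $v$ removed, is determined by $\kpw$; here I would lean on the results of \cite{LiWe20} about recovering enumerators of canonically-defined subposets (for instance $K_{P^-}$ for the deletion of minimal elements), adapting them to the strict/weak setting via the equivalence described in \cite[\S3]{McWa14}. By induction $(P',\om')\cong(Q',\om'')$, and since the same last operation was identified for $Q$, reattaching $v$ identically gives $(P,\om)\cong(Q,\om')$. The main obstacle, and where the real work lies, is the strict-vs-weak discrimination step together with verifying that the peeled poset stays inside $\Cb$: the $P$-partition enumerator is famously insensitive to many structural features (Figure~\ref{fig:trees_equal_kpw}(a) shows genuine equalities even among labeled trees), so the argument must crucially use that we are inside the rooted, ``fair''/$\Cb$ class to rule out the coincidences that occur in general. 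I anticipate that the fairness condition—each vertex being uniformly strict or weak toward its children—is exactly what makes the attachment data recoverable, and pinning down this dependence cleanly is the crux of the proof.
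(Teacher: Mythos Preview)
Your inductive scaffold is right, but there is a genuine gap at the very first step of the reduction: you invoke Corollary~\ref{cor:factors} to pass to connected components, but that result (and the irreducibility theorem of \cite{LiWe21} behind it) is proved only for \emph{naturally labeled} posets. For general $(P,\om)$ with a mixture of strict and weak edges, irreducibility of $\kpw$ when $(P,\om)$ is connected is an open question (see the discussion around \cite[Questions~7.2 and~7.3]{McWa14}). The paper does not sidestep this; on the contrary, the irreducibility of $\kpw$ for connected $(P,\om)\in\Cb$ is the crux of the argument (Proposition~\ref{prop:irred-C}), and it is proved by a separate lemma (Lemma~\ref{lem:irred}) showing that $F_1\wP f$, $F_1\sP f$, $f\wP F_1$, $f\sP F_1$ are irreducible whenever $f$ is primitive, using the bar involution and a simple observation about the first/last parts in the $F$-support of a product (Lemma~\ref{lem:deg2}). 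Without this, you cannot match connected components and the induction does not get off the ground.

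A second, smaller issue: your plan to detect top-vs-bottom via jump statistics and to peel via the \cite{LiWe20} deletion results again leans on theorems stated for the naturally labeled case, and the detours through dualization are unnecessary. The paper's argument is much more direct: by Proposition~\ref{prop:k_products}, $K_{[1]\wP P'}=F_1\wP K_{P'}$ (and similarly for the other three operations), so one reads off which of the four cases applies just from whether every composition in the $F$-support has first part $=1$ or $>1$ (and likewise for the last part), and then recovers $K_{P'}$ by the obvious inverse operation on each $F_\alpha$. Finally, your concern about the peeled poset remaining in $\Cb$ is a non-issue: by the recursive Definition~\ref{def:C}, if $P=[1]\wP P'$ (etc.) with $P\in\Cb$, then $P'\in\Cb$ by construction.
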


The crux of the proof is the irreducibility result Proposition~\ref{prop:irred-C}, in the same way that irreducibility played a key role in \cite{HaTs17,LiWe21}.  We first need more background on \qsym.

%%%%%%%%%%%%%%%%%%%%%%%%%
\subsection{Products of quasisymmetric functions}\label{sub:prods_qsym}
%%%%%%%%%%%%%%%%%%%%%%%%%

It will help our intuition to recall how to interpret $F_\alpha$ as a $(P,\om)$-partition enumerator.  If $\alpha$ is a composition of $n$, we let $P$ be the chain with $n$ elements, labeled from top to bottom by any permutation $\pi$ of $[n]$ such that $\co(\pi) = \alpha$.  That the resulting $\kpw = F_\alpha$ follows directly from Theorem~\ref{theo:KinF}; see Figure~\ref{fig:f_poset} for an example.  Thinking just in terms of strict and weak edges, for a general $\alpha$, simply insert the strict edges so that the numbers of elements in the chains of contiguous weak edges, from bottom to top, match the parts of $\alpha$.
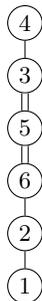
\begin{figure}
\begin{center}
\begin{tikzpicture}[scale=0.7] 
\tikzstyle{every node}=[draw, shape=circle, inner sep=2pt]; 
\draw (0,0) node (a1) {\small 1};
\draw (0,1) node (a2) {\small 2};
\draw (0,2) node (a3) {\small 6};
\draw (0,3) node (a4) {\small 5};
\draw (0,4) node (a5) {\small 3};
\draw (0,5) node (a6) {\small 4};
\draw (a1) -- (a2) -- (a3);
\draw (a5) -- (a6);
\draw[double distance=2pt] (a3) --(a4) -- (a5);
\end{tikzpicture}
\end{center}
\caption{This labeled poset has $(P,\om)$-partition enumerator $F_{312}\,.$}
\label{fig:f_poset}
\end{figure}

From~\eqref{equ:kp}, we know that labeled posets $(P,\om)$ and $(Q,\om')$  satisfy 
\[
\kpw K_{(Q,\om')} = K_{(P,\om) \sqcup (Q,\om')}.
\]
Thus we can interpret the product $F_\alpha F_\beta$ as a $(P,\om)$-partition enumerator for a disjoint union of two appropriately labeled chains.  One fact that we need later from this interpretation is summarized in the following lemma.

\begin{lemma}\label{lem:deg2}
Let $f_1$ and $f_2$ be two non-constant elements of \qsym.
Then the $F$-support of $f_1 f_2$ contains at least one composition $\gamma$ whose first part $\gamma_1$ is at least $2$.
The same is true when focusing on the last part of the compositions appearing in the $F$-support of $f_1 f_2$.
\end{lemma}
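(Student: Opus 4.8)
The plan is to reduce to the homogeneous case and then run a leading-term argument in the fundamental basis, using the shuffle description of products of $F$'s to rule out cancellation. Throughout, for a composition $\gamma=(\gamma_1,\dots,\gamma_k)$ the condition ``first part at least $2$'' means $\gamma_1\ge 2$.

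First I would reduce to homogeneous factors. Since $f_1$ and $f_2$ are non-constant, each has a nonzero top homogeneous component, say $g_1$ of degree $D_1\ge 1$ and $g_2$ of degree $D_2\ge 1$. Because $D_1+D_2$ is strictly larger than the degree of every other product of homogeneous components, the degree-$(D_1+D_2)$ part of $f_1 f_2$ equals $g_1 g_2$; hence the $F$-support of $f_1 f_2$ contains that of $g_1 g_2$, and it suffices to produce a composition with first part at least $2$ in the $F$-support of the homogeneous product $g_1 g_2$. Next I would recall the product rule coming from the disjoint-union-of-chains interpretation discussed just before the lemma: if $\sigma$ and $\tau'$ are words on disjoint, totally ordered alphabets realizing the descent compositions $\alpha$ and $\beta$ (with every letter of $\tau'$ exceeding every letter of $\sigma$), then $F_\alpha F_\beta=\sum_{w}F_{\co(w)}$, where $w$ runs over the shuffles $\sigma\shuffle\tau'$, these being exactly the elements of $\L$ of the disjoint union by Theorem~\ref{theo:KinF}. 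The key structural observation is that among all these shuffles the initial increasing run has length at most $\alpha_1+\beta_1$, with equality forced by the unique arrangement placing the first ascending run of $\sigma$ followed by the first ascending run of $\tau'$; iterating this run by run shows that the componentwise zero-padded sum, which I denote $\alpha\oplus\beta$, is the unique lexicographically largest composition in the $F$-support of $F_\alpha F_\beta$ and that it occurs with a strictly positive coefficient. In particular its first part is $\alpha_1+\beta_1\ge 2$.

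Then I would kill cancellation by passing to lexicographically maximal terms. Let $\alpha^\ast$ and $\beta^\ast$ be the lex-largest compositions in the $F$-supports of $g_1$ and $g_2$, write $g_1=\sum_\alpha a_\alpha F_\alpha$ and $g_2=\sum_\beta b_\beta F_\beta$, and set $\Gamma=\alpha^\ast\oplus\beta^\ast$. The coefficient of $F_\Gamma$ in $g_1 g_2$ is $\sum_{\alpha,\beta} a_\alpha b_\beta\,[F_\Gamma](F_\alpha F_\beta)$. Monotonicity of $\oplus$ under lexicographic order gives $\alpha\oplus\beta\preceq\Gamma$ for every pair in the supports, so $\Gamma$ is the global lexicographic maximum and can appear in $F_\alpha F_\beta$ only when $\alpha\oplus\beta=\Gamma$. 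The crux is the uniqueness statement that $\alpha\oplus\beta=\alpha^\ast\oplus\beta^\ast$ with $\alpha\preceq\alpha^\ast$ and $\beta\preceq\beta^\ast$ forces $\alpha=\alpha^\ast$ and $\beta=\beta^\ast$: if $\alpha\prec\alpha^\ast$, then at the first differing coordinate $i$ one has $\alpha_i<\alpha^\ast_i$, which forces $\beta_i>\beta^\ast_i$ and hence $\beta\succ\beta^\ast$, a contradiction. Thus only the pair $(\alpha^\ast,\beta^\ast)$ contributes, giving coefficient $a_{\alpha^\ast}b_{\beta^\ast}\,[F_\Gamma](F_{\alpha^\ast}F_{\beta^\ast})\ne 0$, while $\Gamma_1=\alpha^\ast_1+\beta^\ast_1\ge 2$, as required.

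I expect the main obstacle to be exactly this anti-cancellation step: many compositions share the maximal first part $\alpha^\ast_1+\beta^\ast_1$, so tracking ``first part'' alone is too coarse. Refining to the lexicographically maximal composition $\Gamma$, together with the additivity of the leading term $\alpha\oplus\beta$ and its lex-monotonicity, is precisely what isolates a single non-cancelling contribution. Finally, the statement about the last part follows by the mirror-image argument---replacing lexicographic order by its right-to-left version, or equivalently applying the reversal involution $F_\alpha\mapsto F_{\alpha^{\mathrm{rev}}}$ of \qsym---for which the relevant leading term is the right-aligned padded sum, whose last part is $\alpha^\ast_{\mathrm{last}}+\beta^\ast_{\mathrm{last}}\ge 2$.
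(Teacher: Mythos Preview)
Your proof is correct and follows essentially the same approach as the paper: isolate the lexicographically maximal composition in the $F$-support of the product by looking at the lex-maximal $\alpha^\ast,\beta^\ast$ in the supports of the factors, observe that this top composition has first part at least $2$, and argue that no cancellation can kill it. The paper compresses your monotonicity and uniqueness steps into the single sentence ``It is clear that the lexicographically maximal element in the product $f_1 f_2$ comes from the product $F_\alpha F_\beta$''; your explicit identification of the top term as the padded sum $\alpha^\ast\oplus\beta^\ast$ and your preliminary reduction to homogeneous components make the non-cancellation rigorous where the paper leaves it implicit.
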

\begin{proof}
First consider two non-empty compositions $\alpha$ and $\beta$.  Construct two labeled chains $(P,\om)$ and $(Q,\om')$ such that $F_\alpha=K_{(P,\om)}$ and $F_\beta=K_{(Q,\om')}$.  Thus $F_\alpha F_\beta = K_{(P,\om) \sqcup (Q,\om')}$.  It is obvious that there is at least one element $\pi$ in the set $\L((P,\om) \sqcup (Q,\om'))$ with $\pi_1<\pi_2$.  By Theorem~\ref{theo:KinF}, this gives rise to an element $F_\gamma$ in the $F$-support of $F_\alpha F_\beta$ with $\gamma_1\ge2$.

Now, let us consider two non-constant elements $f_1$ and $f_2$ of \qsym.
Let us denote by $\alpha$ (resp.\ $\beta$) the lexicographically maximal composition in the $F$-support of $f_1$ (resp.\ $f_2$).
It is clear that the lexicographically maximal element in the product $f_1 f_2$ comes from the product $F_\alpha F_\beta$,
thus the case above applies.

The case of the last part is similar, requiring just two tweaks.  First, if $\ell$ denotes the number of elements in $(P,\om) \sqcup (Q,\om')$, it is obvious that there is at least one element $\pi$ in the set $\L((P,\om) \sqcup (Q,\om'))$ with $\pi_{\ell-1}<\pi_\ell$.   This yields an element $F_\gamma$ in the $F$-support of $F_\alpha F_\beta$ such that the last part of $\gamma$ is at least 2. Secondly, instead of considering the lexicographically maximal composition, we use the composition whose reversal is lexicographically maximal.
\end{proof}

Let us introduce two operations on compositions 
(which are already known but we shall use notation relevant to our context):
\begin{equation}\label{equ:uparrow}
(\alpha_1,\alpha_2,\dots,\alpha_k) \wP (\beta_1,\beta_2,\dots,\beta_\ell)
=
(\alpha_1,\alpha_2,\dots,\alpha_k+\beta_1,\beta_2,\dots,\beta_\ell),
\end{equation}
and
\begin{equation}\label{equ:upuparrow}
(\alpha_1,\alpha_2,\dots,\alpha_k) \sP (\beta_1,\beta_2,\dots,\beta_\ell)
=
(\alpha_1,\alpha_2,\dots,\alpha_k,\beta_1,\beta_2,\dots,\beta_\ell),
\end{equation}
which give rise to two (noncommutative) products in \qsym, defined on the $F$-basis:
\begin{equation}\label{equ:f_uparrow}
F_{\alpha} \wP F_{\beta}
=
F_{\alpha \wP \beta},
\end{equation}
and
\begin{equation}\label{equ:f_upuparrow}
F_{\alpha} \sP F_{\beta}
=
F_{\alpha \sP \beta}\,.
\end{equation}

The use of the same notation as for labeled posets is justified by the following statement.
\begin{proposition}\label{prop:k_products}
For any two labeled posets $(P,\om)$ and $(Q,\om')$, 
$$K_{(P,\om)\wP(Q,\om')}=\kpw \wP K_{(Q,\om')}$$
and
$$K_{(P,\om)\sP(Q,\om')}=\kpw \sP K_{(Q,\om')}.$$
\end{proposition}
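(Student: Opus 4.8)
The plan is to reduce both identities to Theorem~\ref{theo:KinF}, so that each side becomes a sum of fundamental quasisymmetric functions indexed by descent compositions of linear extensions, and then to verify that the two ordinal-sum constructions match composition by composition. First I would record the structure of the linear extensions of an ordinal sum. Since $(P,\om)\wP(Q,\om')$ and $(P,\om)\sP(Q,\om')$ both place every element of $P$ strictly below every element of $Q$, every linear extension must list a linear extension $\sigma$ of $P$ followed by a linear extension $\tau$ of $Q$, and conversely every such concatenation $\sigma\tau$ is a linear extension. Thus $\L((P,\om)\wP(Q,\om'))=\{\sigma\tau:\sigma\in\L(P,\om),\ \tau\in\L(Q,\om')\}$, and likewise for the strict sum; here $\tau$ (in the weak case) or $\sigma$ (in the strict case) is written using the shifted labels dictated by the labeling conventions of the preceding subsection.

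The crux is to compute $\co(\sigma\tau)$ from $\co(\sigma)$ and $\co(\tau)$, and this is exactly where the labeling conventions do the work. Write $\alpha=\co(\sigma)=(\alpha_1,\dots,\alpha_k)$, a composition of $|P|$, and $\beta=\co(\tau)=(\beta_1,\dots,\beta_\ell)$, a composition of $|Q|$. The descents of $\sigma\tau$ strictly inside the first $|P|$ letters are exactly those of $\sigma$, and those strictly after position $|P|$ are exactly those of $\tau$, since adding a constant to all labels of one block preserves its internal descent structure. The only position to analyze is the junction at position $|P|$, comparing the last letter of $\sigma$ with the first letter of $\tau$. For the weak sum the convention makes every label of $Q$ larger than every label of $P$, so the junction is an ascent; the final run of $\sigma$ merges with the initial run of $\tau$, giving $\co(\sigma\tau)=(\alpha_1,\dots,\alpha_{k-1},\alpha_k+\beta_1,\beta_2,\dots,\beta_\ell)=\alpha\wP\beta$ as in~\eqref{equ:uparrow}. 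For the strict sum the convention makes every label of $P$ larger than every label of $Q$, so the junction is a descent, the two runs stay separate, and $\co(\sigma\tau)=\alpha\sP\beta$ as in~\eqref{equ:upuparrow}.

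Finally I would assemble the pieces. Combining Theorem~\ref{theo:KinF} with the junction computation gives
\[
K_{(P,\om)\wP(Q,\om')}=\sum_{\sigma,\tau}F_{\co(\sigma)\wP\co(\tau)}=\sum_{\sigma,\tau}F_{\co(\sigma)}\wP F_{\co(\tau)},
\]
where the second equality is the definition~\eqref{equ:f_uparrow} of $\wP$ on the $F$-basis. Reading $\wP$ as a bilinear operation on \qsym\ then lets me factor the double sum as $\bigl(\sum_{\sigma}F_{\co(\sigma)}\bigr)\wP\bigl(\sum_{\tau}F_{\co(\tau)}\bigr)=\kpw\wP K_{(Q,\om')}$, and the strict case is identical via~\eqref{equ:f_upuparrow}. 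I expect the main obstacle to be expository rather than mathematical: stating the junction analysis cleanly, in particular confirming that the two labeling conventions force precisely an ascent (respectively a descent) at the junction, and making explicit that $\wP$ and $\sP$ are bilinear operations so that the factorization of the double sum is legitimate.
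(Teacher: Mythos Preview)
Your proposal is correct and follows essentially the same approach as the paper's proof: both apply Theorem~\ref{theo:KinF}, identify $\L((P,\om)\wP(Q,\om'))$ with concatenations $\sigma\tau$, compute $\co(\sigma\tau)=\co(\sigma)\wP\co(\tau)$ (and analogously for $\sP$), and then factor the resulting double sum via~\eqref{equ:f_uparrow} and~\eqref{equ:f_upuparrow}. Your junction analysis is more explicit than the paper's, which simply asserts the descent-composition identity as a consequence of the labeling convention, but the logical structure is identical.
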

\begin{proof}
Let us prove the first assertion.
By Theorem~\ref{theo:KinF}, 
\[
K_{(P,\om) \wP (Q,\om')} = \sum_{\pi\in\L((P,\om)\wP (Q,\om'))} F_{\co(\pi)}.
\]
By definition of $\wP$ for labeled posets, those $\pi\in\L((P,\om)\wP (Q,\om'))$ are concatenations of $\sigma\in\L(P,\om)$ and $\tau\in\L(Q,\om')$ but where each entry of $\tau$ is increased by $|P|$.   Consequently, $\co(\pi) = \co(\sigma)\wP \co(\tau)$. Thus, using \eqref{equ:f_uparrow}, we have 
\begin{align*}
K_{(P,\om) \wP (Q,\om')} 
&= \sum_{\sigma\in\L(P,\om),\:\tau\in\L(Q,\om')} F_{\co(\sigma)\wP \co(\tau)}\\
&= \sum_{\sigma\in\L(P,\om),\:\tau\in\L(Q,\om')} F_{\co(\sigma)}\wP F_{\co(\tau)}\\
&= \kpw \wP K_{(Q,\om')}\ .
\end{align*}
The second assertion is proved similarly.
\end{proof}

%%%%%%%%%%%%%%%%%%%%%%%%%
\subsection{An involution on labeled posets}
%%%%%%%%%%%%%%%%%%%%%%%%%

Given a labeled poset $(P,\om)$, we can switch the strictness of the edges to obtain a new labeled poset $\inv{(P,\om)}$.  
We will follow \cite{McWa14} by referring to this operation on labeled posets as the \emph{bar operation}.  In the setting of quasisymmetric functions, we may define 
\[
\inv{F_\alpha} = \inv{F_{S(\alpha),n}} = F_{\inv{S(\alpha)},n}
\]
where for any subset $S$ of $[n-1]$ we let $\inv{S}=[n-1]\backslash S$,  and we extend it to \qsym\ by linearity.  In the example of Figure~\ref{fig:f_poset}, we get $\inv{F_{312}} = \inv{F_{\{3,4\},6}} = F_{\{1,2,5\},6}= F_{1131}$.
The following lemma states that these two bar operations are compatible, and that the latter operation commutes with the product in \qsym.
\begin{lemma}\label{lem:bar}
We have:
\begin{enumerate}
\item $\inv{\kpw} = K_{\inv{(P,\om)}}$ for any labeled poset $(P,\om)$;
\item $\inv{fg}=\inv{f}\inv{g}$ for any $f$ and $g$ in \qsym.
\end{enumerate}
\end{lemma}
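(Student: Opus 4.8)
The plan is to prove the two parts separately, treating part~(1) by a direct combinatorial argument on linear extensions and part~(2) by reducing to the $F$-basis where the bar operation has a clean definition. For part~(1), I would start from Theorem~\ref{theo:KinF}, which expresses $\kpw = \sum_{\pi \in \L(P,\om)} F_{\co(\pi)}$. The key observation is that the set of linear extensions $\L(P,\om)$ depends only on the underlying poset $P$, not on which edges are strict or weak; switching strictness does not change what counts as a linear extension. What changes is the descent composition: if $\pi$ is a linear extension of $(P,\om)$, then the descents of $\pi$ are governed by comparisons between consecutive entries, and the bar operation complements exactly the descent set. More precisely, I would argue that $\L(\inv{(P,\om)}) = \L(P,\om)$ and that for each such $\pi$, passing from $(P,\om)$ to $\inv{(P,\om)}$ sends $\co(\pi)$ to the composition with complemented descent set, i.e.\ to $\inv{\co(\pi)}$ in the notation of the $F$-basis bar. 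Combining these, $K_{\inv{(P,\om)}} = \sum_\pi F_{\inv{\co(\pi)}} = \sum_\pi \inv{F_{\co(\pi)}} = \inv{\kpw}$, giving~(1).

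For part~(2), the cleanest route is to verify $\inv{fg}=\inv{f}\,\inv{g}$ on the $F$-basis and then extend by bilinearity. So I would reduce to showing $\inv{F_\alpha F_\beta} = \inv{F_\alpha}\,\inv{F_\beta}$ for compositions $\alpha$ and $\beta$. Here part~(1) is itself a convenient tool: realize $F_\alpha$ and $F_\beta$ as $(P,\om)$-partition enumerators of labeled chains (as in the discussion around Figure~\ref{fig:f_poset}), so that $F_\alpha F_\beta = K_{(P,\om)\sqcup(Q,\om')}$ by the disjoint-union product formula. Applying part~(1) to the labeled poset $(P,\om)\sqcup(Q,\om')$ gives $\inv{F_\alpha F_\beta} = K_{\inv{(P,\om)\sqcup(Q,\om')}}$. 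Since barring a disjoint union bars each component and leaves the disjoint-union structure intact, $\inv{(P,\om)\sqcup(Q,\om')} = \inv{(P,\om)}\sqcup\inv{(Q,\om')}$, whose enumerator is $K_{\inv{(P,\om)}}K_{\inv{(Q,\om')}} = \inv{F_\alpha}\,\inv{F_\beta}$ by part~(1) again. This closes the loop.

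As an alternative for part~(2) that avoids leaning on part~(1), one can argue directly that the map $F_\alpha \mapsto \inv{F_\alpha}$ is the linear extension of a known algebra endomorphism of \qsym. The bar operation on the $F$-basis is, up to identification, one of the standard involutions on quasisymmetric functions, and such involutions are ring homomorphisms; one could cite or reprove that $\inv{F_\alpha F_\beta}=\inv{F_\alpha}\,\inv{F_\beta}$ by comparing $F$-expansions of both products. I would present the first route as primary since it keeps the proof self-contained within the machinery already developed in the excerpt.

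The step I expect to be the main obstacle is the precise bookkeeping in part~(1): carefully checking that the descent set of a linear extension behaves as the \emph{complement} under the bar operation. One must confirm that a descent at position $i$ of $\pi$ for $(P,\om)$ becomes an ascent for $\inv{(P,\om)}$ and vice versa, for \emph{every} consecutive pair, including pairs of consecutive entries that are incomparable in $P$ (where the comparison is forced by the values rather than by a strict/weak edge). The subtlety is that descents in a linear extension are determined by the numerical values of consecutive labels together with the strict/weak status of the relevant covering relations, so one must verify that toggling every edge's strictness toggles exactly the descent/ascent status at each position and nothing else. Once this complementation is pinned down precisely, both parts follow cleanly.
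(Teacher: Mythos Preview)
Your approach to part~(b) is exactly the paper's: realize $F_\alpha$ and $F_\beta$ as enumerators of labeled chains, apply part~(a) to their disjoint union, and extend by bilinearity.

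For part~(a) your high-level plan---use Theorem~\ref{theo:KinF} and show the descent set of each linear extension gets complemented---also matches the paper. But your final paragraph reveals a misconception that would derail the execution. You write that ``descents in a linear extension are determined by the numerical values of consecutive labels together with the strict/weak status of the relevant covering relations''; this is not so. The descent set of $\pi \in \L(P,\om)$ is purely $\{i : \pi_i > \pi_{i+1}\}$---strict/weak edges play no role in that definition. Consequently, ``toggling every edge's strictness'' does nothing to $\des(\pi)$ for a fixed permutation $\pi$, so the verification you propose cannot succeed as stated. Relatedly, your claim $\L(\inv{(P,\om)}) = \L(P,\om)$ is only true if you interpret linear extensions as orderings of the elements of $P$; as sets of permutations of the $\om$-labels (which is how $\des$ is computed) the two sides differ.

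The clean fix---and the paper's proof---is to remember that the strict/weak designation is induced by the labeling, and to realize the bar operation concretely by the relabeling $\om'(p) = |P|+1-\om(p)$. Under this relabeling, each $\pi = \pi_1\cdots\pi_n \in \L(P,\om)$ corresponds to $(n{+}1{-}\pi_1)\cdots(n{+}1{-}\pi_n) \in \L(P,\om')$, and the complementation of the descent set is then immediate from the numerical definition, with no case analysis on comparability needed.
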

\begin{proof}
For the first assertion, observe that the bar operation that sends $(P,\om)$ to $\inv{(P,\om)}$ on general labeled posets can be done at the level of the labeling $\om$ by simply replacing each label $\omega(i)$ by $|P|+1-\om(i)$.  Then in the notation of the $F$-expansion of $\kpw$ in Theorem~\ref{theo:KinF}, each $\des(\pi)$ is sent to its complement $[n-1]\backslash\des(\pi)$, resulting in $\inv{\kpw}$, as required.

For the second assertion, recall the interpretation of $F_\alpha$ as $\kpw$ for a labeled chain $(P,\omega)$ from the start of Subsection~\ref{sub:prods_qsym}, along with the interpretation there of $F_\alpha F_\beta$.
Thus (a) implies that $\inv{F_\alpha F_\beta} = \inv{F_\alpha} \; \inv{F_\beta}$, from which (b) follows by linearity.
\end{proof}

%%%%%%%%%%%%%%%%%%%%%%%%%
\subsection{Irreducibility}
%%%%%%%%%%%%%%%%%%%%%%%%%

A crucial fact towards proving Theorem \ref{thm:fair-trees} is the irreducibility 
of $\kpw$ for elements of $\Cb$. 

\begin{proposition}\label{prop:irred-C}
If $(P,\om)$ is a connected element of $\Cb$ then $\kpw$ is irreducible in \qsym.
\end{proposition}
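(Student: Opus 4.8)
The plan is to prove irreducibility by induction on $|P|$, using the recursive structure of $\Cb$ from Definition~\ref{def:C}. Since $(P,\om)$ is connected, it is not a disjoint union, so it must have been built by one of the ordinal-sum operations (c) or (d). By the bar operation (Lemma~\ref{lem:bar}), which sends strict edges to weak edges and commutes with products, irreducibility of $\kpw$ is equivalent to irreducibility of $K_{\inv{(P,\om)}}$; thus without loss of generality I may assume the top operation used was a \emph{weak} ordinal sum, i.e., $(P,\om) = [1]\wP(P',\om')$ or $(P,\om) = (P',\om')\wP[1]$ for some connected $(P',\om')\in\Cb$. By Proposition~\ref{prop:k_products}, this gives $\kpw = F_1 \wP K_{(P',\om')}$ or $\kpw = K_{(P',\om')}\wP F_1$, where $F_1 = K_{[1]}$.

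Next I would set up the contradiction. Suppose $\kpw = fg$ with $f,g$ both non-constant in \qsym. The key leverage is Lemma~\ref{lem:deg2}: the $F$-support of any product of two non-constant quasisymmetric functions must contain a composition whose first part is at least $2$, and also one whose last part is at least $2$. So the strategy is to show that for a connected element of $\Cb$, the $F$-support of $\kpw$ \emph{forbids} having both of these features simultaneously available, contradicting that $\kpw$ factors nontrivially. Concretely, the operation $\wP$ on compositions from \eqref{equ:uparrow} merges the last part of the first composition with the first part of the second, while prepending or appending $F_1$ via $\wP$ \emph{glues} a $1$ onto an end and adds it to the adjacent part. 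I would track how prepending/appending $F_1$ with the $\wP$ product controls the extremal parts of every composition in the $F$-support, and compare against what a genuine factorization would force.

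The cleanest route is induction paired with the degree-2 obstruction. By the inductive hypothesis, $K_{(P',\om')}$ is irreducible. If $\kpw = F_1 \wP K_{(P',\om')}$ were reducible as $fg$, I would argue that since $F_1\wP(\,\cdot\,)$ has the effect on the $F$-support of forcing the first part of every composition to be controlled by the single bottom element, the product structure is incompatible with Lemma~\ref{lem:deg2} unless one factor is constant. The careful bookkeeping is to show that the $\wP$-operation with $F_1$ on the left produces an $F$-support in which the first part of \emph{every} composition equals the value it would have after adding $1$, and to derive from a hypothetical factorization $fg$ a contradiction with the two-sided conclusion of Lemma~\ref{lem:deg2} applied to $f$ and $g$ — essentially, a nontrivial factorization would have to respect the ordinal-sum decomposition, eventually contradicting the irreducibility of $K_{(P',\om')}$ inherited from the induction.

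The main obstacle will be the inductive step itself: controlling how a purely algebraic factorization $\kpw = fg$ in \qsym\ interacts with the combinatorial ordinal-sum decomposition of $(P,\om)$. Unlike the disconnected case, where Corollary~\ref{cor:factors} gives the factorization directly, here I cannot appeal to a known irreducibility theorem and must instead extract the contradiction from the extremal-part analysis of the $F$-support via Lemma~\ref{lem:deg2}. The delicate point is ensuring that after stripping off the bottom (or top) element via the $\wP[1]$ or $[1]\wP$ operation, the factors $f$ and $g$ can be matched up with sub-posets in a way that lets the inductive hypothesis apply; I expect this to require a careful lexicographic-extremality argument (as in the proof of Lemma~\ref{lem:deg2}) to pin down exactly which compositions carry the first-part and last-part information, and to rule out any factorization that does not come from an honest disjoint-union split.
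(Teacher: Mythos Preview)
There is a genuine gap in your inductive scheme. You write that $(P,\om)=[1]\wP(P',\om')$ (or one of the other three forms) for some \emph{connected} $(P',\om')\in\Cb$, and then invoke the inductive hypothesis to say $K_{(P',\om')}$ is irreducible. But Definition~\ref{def:C}(c)--(d) allows $(P',\om')$ to be any element of $\Cb$, and in particular a disjoint union; for instance a fair tree whose root has three children decomposes as $[1]\wP\bigl((P_1,\om_1)\sqcup(P_2,\om_2)\sqcup(P_3,\om_3)\bigr)$. In that case $K_{(P',\om')}=\prod_i K_{(P_i,\om_i)}$ is genuinely reducible, so the inductive hypothesis is both inapplicable and false. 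Your plan to ``match the factors $f,g$ to sub-posets so the inductive hypothesis applies'' cannot be carried out, because there is no irreducibility of $K_{(P',\om')}$ to contradict.

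The fix, and what the paper actually does, is to replace the induction on irreducibility by the single observation that $K_{(P',\om')}$ is \emph{primitive} (Proposition~\ref{prop:prim}: the leading monomial coefficient is $1$ for every labeled poset, connected or not). Lemma~\ref{lem:irred} then shows directly that $F_1\wP f$, $F_1\sP f$, $f\wP F_1$, $f\sP F_1$ are irreducible whenever $f$ is primitive, with no recursion needed. Note also that your bar-reduction goes the wrong way: for $F_1\sP f$ every composition in the $F$-support has first part equal to $1$, which immediately contradicts Lemma~\ref{lem:deg2}; for $F_1\wP f$ every first part is at least $2$, which does \emph{not} contradict Lemma~\ref{lem:deg2}, so one bars to land in the strict case rather than the reverse. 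In short: drop the induction, invoke primitivity, and apply Lemma~\ref{lem:deg2} to the $\sP$ cases (using the bar operation to reduce $\wP$ to $\sP$).
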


To prove this, we first recall the following general property.  A polynomial with integer coefficients is said to be \emph{primitive} if 
whenever an integer $k$ divides all its coefficients we have $k=\pm1$.
In the same vein, since we consider $\qsym \subseteq \mathbb{Z}[[x_1, x_2, \ldots]]$, we say $f \in \qsym$ is \emph{primitive} if whenever an integer $k$ divides $f$ in \qsym\ we have $k = \pm1$.
We are interested in whether $\kpw$ is primitive.  

To answer this question, let us define the \emph{leading term} of a formal power series $f$ expanded in terms of monomials as the term $c_\alpha x^\alpha = c_\alpha x_1^{\alpha_1} x_2^{\alpha_2} \cdots$ with lexicographically largest $\alpha$ such that $c_\alpha \neq 0$; naturally, we call this $c_\alpha$ the \emph{leading coefficient}.  For a labeled poset $(P,\om)$, we define $x^{\mathrm{jump}(P,\om)} = x_1^{j_1} x_2^{j_2} \cdots$ where $j_i$ is the number of elements of $(P,\om)$ of jump $i-1$.

\begin{proposition}[{\cite[proof of Proposition 4.2]{McWa14}}]\label{prop:prim}
For any labeled poset $(P,\om)$, the leading term of $\kpw$ is $x^{\mathrm{jump}(P,\om)}$.  In particular, the leading coefficient of $K_{(P, \om)}$ is $1$ and $K_{(P, \om)}$ is primitive.
\end{proposition}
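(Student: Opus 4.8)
The plan is to locate the leading term by exhibiting one explicit $(P,\om)$-partition that attains it, then to show that no $(P,\om)$-partition does better in the lexicographic order and that this optimal partition is unique; primitivity is then a one-line consequence. Since the leading term is defined on the monomial expansion~\eqref{equ:kp}, I would work with $(P,\om)$-partitions directly rather than passing to the $F$-basis.

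The natural candidate is the partition $f_0$ defined by $f_0(p)=\mathrm{jump}(p)+1$. To check that $f_0$ is a valid $(P,\om)$-partition, observe that whenever $a\prec_P b$, any maximal chain from $a$ down to a minimal element extends through $b$ to a strictly longer one, so $\mathrm{jump}(a)<\mathrm{jump}(b)$ and hence $f_0(a)<f_0(b)$. Thus $f_0$ increases strictly across every cover relation, which satisfies the order-preserving condition and every strictness condition at once, independent of which edges are weak or strict. By the very definition of $x^{\mathrm{jump}(P,\om)}$, the number of elements sent to $i$ by $f_0$ is $j_i$, so $f_0$ contributes precisely the monomial $x^{\mathrm{jump}(P,\om)}$.

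The crux is to show this monomial is lexicographically largest and occurs with coefficient $1$. For this I would prove the pointwise bound $f(p)\ge \mathrm{jump}(p)+1$ for every $(P,\om)$-partition $f$, by induction on $\mathrm{jump}(p)$: a minimal element satisfies $f(p)\ge1$, and if $a\prec_P b$ is a cover lying on a maximal descending chain from $b$, then the required increase of $f$ across this edge gives $f(b)\ge f(a)+1\ge \mathrm{jump}(a)+2=\mathrm{jump}(b)+1$. Granting the bound, $f(p)=1$ forces $\mathrm{jump}(p)=0$, so the exponent of $x_1$ is at most $j_1$, with equality exactly when $f$ sends all $j_1$ minimal-jump elements to $1$; deleting those elements and repeating the argument for the values $2,3,\dots$ shows that the exponent sequence of any $f$ is lexicographically at most that of $f_0$, and that achieving equality throughout forces $f=f_0$. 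Therefore the coefficient of $x^{\mathrm{jump}(P,\om)}$ equals $1$.

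Primitivity is now immediate: if an integer $k$ divides $\kpw$ in \qsym, then $k$ divides each monomial coefficient, in particular the leading coefficient $1$, so $k=\pm1$. The one genuinely delicate point is the inductive lower bound together with its rigidity in the equality case; this is where one must track that a $(P,\om)$-partition is forced to increase across the cover relations on the chains computing the jump, and it is the step I would write out most carefully.
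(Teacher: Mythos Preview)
The paper does not supply its own proof here; it simply cites \cite{McWa14}.  Turning to your argument, there is a genuine gap in the inductive lower bound.  You write that for a cover $a\prec_P b$ on a maximal descending chain, ``the required increase of $f$ across this edge gives $f(b)\ge f(a)+1$.''  But Definition~\ref{def:popartition} only forces a strict increase across \emph{strict} edges; across a weak edge one has merely $f(a)\le f(b)$.  Hence the bound $f(p)\ge \mathrm{jump}(p)+1$ need not hold.  Concretely, if $(P,\om)$ is the naturally labeled $2$-chain $a\prec b$, then the constant map $f\equiv 1$ is a valid $(P,\om)$-partition contributing $x_1^{2}$, which is lexicographically larger than $x^{\mathrm{jump}(P,\om)}=x_1x_2$; more generally, for any naturally labeled $P$ the leading monomial of $K_P$ is $x_1^{|P|}$, not $x^{\mathrm{jump}(P,\om)}$.

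So your proof, as written, establishes the first sentence only when every edge of $(P,\om)$ is strict (equivalently, for $\KK{P}$), and indeed that sentence, taken literally for arbitrary $(P,\om)$, is already false by the example above.  The primitivity conclusion is nonetheless correct for all $(P,\om)$, but it requires a different identification of the leading monomial: one checks that the lex-maximal $(P,\om)$-partition is unique by taking $f^{-1}(1)$ to be the unique largest order ideal on which $\om$ restricts to a natural labeling (such ideals are closed under union, so a largest one exists and is unique), and then iterating on the complement.  Your greedy-uniqueness idea is the right one; it is the pointwise inequality step that fails once weak edges are allowed.
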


The next lemma is relevant to the recursive construction of $\Cb$.

\begin{lemma}\label{lem:irred}
If $f$ is a primitive element of \qsym\  then the polynomials
$F_1\wP f$,\ $F_1\sP f$,\ $f\wP F_1$, and $f\sP F_1$
are irreducible in \qsym.
\end{lemma}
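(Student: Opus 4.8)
The plan is to establish the two \emph{strict} cases $F_1\sP f$ and $f\sP F_1$ directly, and then to deduce the two \emph{weak} cases $F_1\wP f$ and $f\wP F_1$ from them by means of the bar operation.

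First I would record how the bar operation interacts with the two ordinal-sum products on \qsym. Working on the $F$-basis and complementing descent sets as in the definition of $\inv{F_\alpha}$, one checks the identities
\[
\inv{F_1\wP f}=F_1\sP \inv{f}\qquad\text{and}\qquad \inv{f\wP F_1}=\inv{f}\sP F_1,
\]
which at the level of a single basis element simply say that prepending or appending a part via $\wP$ becomes prepending or appending via $\sP$ after complementation, and which extend to all of $f$ by linearity. I would also note two preservation facts: the bar operation preserves primitivity, since $\beta\mapsto\inv{\beta}$ only permutes the $F$-support and hence leaves the multiset of $F$-coefficients unchanged; and it preserves irreducibility, since by Lemma~\ref{lem:bar}(b) it is a ring homomorphism and, being an involution, a ring automorphism. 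Consequently, once $F_1\sP \inv{f}$ and $\inv{f}\sP F_1$ are shown to be irreducible for the primitive element $\inv{f}$, their bars $F_1\wP f$ and $f\wP F_1$ are irreducible as well.

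For the strict cases the key observation is a restriction on the support. Since $(1)\sP\beta=(1,\beta)$, every composition in the $F$-support of $F_1\sP f$ has first part equal to $1$, and dually every composition in the $F$-support of $f\sP F_1$ has last part equal to $1$. Suppose now that $F_1\sP f=gh$ with neither $g$ nor $h$ a unit. If $g$ and $h$ are both non-constant, then Lemma~\ref{lem:deg2} forces some composition with first part at least $2$ into the $F$-support of $gh=F_1\sP f$, a contradiction. Otherwise one factor, say $g$, is a constant integer $c$, so that $c$ divides $F_1\sP f$ in \qsym; but the bijection $\beta\mapsto(1,\beta)$ identifies the $F$-coefficients of $F_1\sP f$ with those of $f$, so $F_1\sP f$ is primitive because $f$ is, whence $c=\pm1$ and $g$ is a unit, again a contradiction. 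Thus $F_1\sP f$ is irreducible, and $f\sP F_1$ is treated identically using the last-part half of Lemma~\ref{lem:deg2}.

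The conceptual heart of the argument, and the step I expect to be the main obstacle, is the weak products: their $F$-supports consist of compositions with first part $1+\beta_1\ge2$, so Lemma~\ref{lem:deg2} gives no direct leverage there. The resolution is exactly the observation that the bar operation exchanges $\wP$ with $\sP$, turning each weak case into a strict one where the first-part or last-part argument does apply. The remaining delicate point is the bookkeeping about units: the support argument can only force a factor to be a constant integer, and it is primitivity that then rules this out, so it is essential to have verified that both the operations $F_1\sP(\,\cdot\,)$, $(\,\cdot\,)\sP F_1$ and the bar operation preserve primitivity of $f$.
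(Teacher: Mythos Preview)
Your proof is correct and follows essentially the same route as the paper: the strict cases are handled directly via the support restriction and Lemma~\ref{lem:deg2}, and the weak cases are reduced to the strict ones by the bar operation. The only cosmetic difference is that the paper applies the bar operation to the hypothetical factorization $F_1\wP f=gg'$ and then observes directly that every composition in the $F$-support of $\inv{F_1\wP f}$ has first part~$1$, whereas you first establish the identity $\inv{F_1\wP f}=F_1\sP\inv{f}$ and then invoke the already-proved strict case together with the fact that the bar operation is a ring automorphism; these are two phrasings of the same argument.
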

\begin{proof}
Let us first deal with $F_1\sP f$.
Assume that $F_1\sP f$ is reducible. 
Note that if $f$ expands as $\sum c_\alpha F_\alpha$ then $F_1\sP f = \sum c_\alpha F_{\alpha^+}$, where $\alpha^+$ is obtained from $\alpha$ by appending an entry of 1 at the start.  Thus since $f$ is primitive, $F_1\sP f$ is also primitive, so there exist non-constants $g,g'\in \qsym$, such that 
\begin{equation}\label{eq:lem-irred1}
F_1\sP f = g g'.
\end{equation}
All $\alpha$ in the $F$-support of $F_1\sP f$ satisfy $\alpha_1=1$ but by Lemma~\ref{lem:deg2}, the $F$-support of the right-hand side of~\eqref{eq:lem-irred1} contains at least one $\alpha$ with $\alpha_1 \geq 2$, a contradiction.  The case of  $f\sP F_1$ is treated similarly, using the last part of $\alpha$ instead of the first part.

Let us now consider $F_1\wP f$.
We again assume $F_1\wP f$ is reducible and now $\alpha^+$ is obtained from $\alpha$ by increasing the first part by 1.  The primitivity 
implies the existence of two non-constants $g,g'\in \qsym$, 
such that $F_1\wP f = g g'$. We apply the bar operator to this equality
and get 
\[
\inv{F_1\wP f} = \inv{g g'} = \inv{g} \inv{g'}.
\]
by Lemma~\ref{lem:bar}(b). Since the elements $\alpha$ of the $F$-support of $F_1\wP f$ all satisfy $\alpha_1\ge 2$, we know all $\alpha'$ in the $F$-support of 
$\inv{F_1\wP f}$ satsify $\alpha'_1 = 1$.
We are thus led to the same contradiction as in the first case. 
The case of  $f\wP F_1$ is treated similarly.
\end{proof}

Since we will only be considering labeled posets in $\Cb$ for the remainder of this section, we will abbreviate $(P,\om)$ as $P$ for easier reading.

\begin{proof}[Proof of Proposition \ref{prop:irred-C}]
If $P$ has just a single element, then $K_P = F_1$, which is irreducible.  Otherwise, since $P$ is connected and constructed recursively, it must be of one of the forms $[1]\wP P'$,\ $[1]\sP P'$,\ 
$P'\wP [1]$, or $P'\sP [1]$, where $P' \in \Cb$.  The result now follows from Propositions~\ref{prop:k_products} and~\ref{prop:prim}, and Lemma~\ref{lem:irred}.
\end{proof}

\begin{remark}
A special case of Proposition~\ref{prop:irred-C}, or even just Lemma~\ref{lem:irred}, is that $F_\alpha$ is irreducible in \qsym, as previously shown in~\cite{LaPy08}.
\end{remark}

We are now in a position to prove the main result of this section.  

\begin{proof}[Proof of Theorem \ref{thm:fair-trees}]
It is obvious that when $P$ and $Q$ are isomorphic, then $K_P=K_Q$.
Let us prove the converse.

We shall use induction on the size of $P$, with the case of size 1 being trivial.

Assuming now that the size of $P$ is at least $2$, 
we may decompose $P$ and $Q$ uniquely into non-empty
connected components:
$P=\sqcup_{i=1}^r P_i$
and
$Q=\sqcup_{i=1}^s Q_i$.
We have
\[
\prod_{i=1}^r K_{P_i}
=
\prod_{i=1}^s K_{Q_i}.
\]
Since \qsym\  is a unique factorization domain \cite{Haz01,LaPy08,MaRe95}, 
the irreducibility of $K_{P_i}$ and $K_{Q_i}$ from Proposition~\ref{prop:irred-C} 
implies that $r=s$ and that for every $i$, we have $K_{P_i} = K_{Q_i}$ (up to a suitable renumbering).

When $r \ge 2$, the size of each $P_i$ and $Q_i$ is smaller than the size of $P$
and thus $P_i$ and $Q_i$ are isomorphic for every $i$ by the induction hypothesis. 
Thus $P$ and $Q$ are also isomorphic. 

Suppose now that $r = 1$, i.e., $P$ and $Q$ are connected. Thus, 
since their size is greater than $1$, they may be written as
$P=[1]\wP P'$ or $P=[1]\sP P'$
or
$P=P'\wP [1]$ or $P=P' \sP [1]$,
and similarly for $Q$.
By Theorem~\ref{theo:KinF}, we can distinguish among these four possibilities by observing whether the first part of the compositions $\alpha$ in the $F$-support of $K_P$ (equivalently $K_Q$) are all equal to $1$ or all greater than $1$, and similarly for the last part of all such $\alpha$. Specifically, $P=[1]\wP P'$ if and only if $\alpha_1 > 1$ for all $\alpha$, implying $Q=[1]\wP Q'$.  Similarly, $P=[1]\sP P'$ if and only if $\alpha_1=1$ for all $\alpha$, implying $Q=[1]\sP Q'$.  And $P=P'\wP [1]$ (resp.\ $P=P' \sP [1]$) if and only if the last part of $\alpha$ is greater than 1 (resp.\ equals 1) for all $\alpha$, implying $Q=Q'\wP [1]$ (resp.\ $Q=Q' \sP [1]$).  
So for a given $P$ and $Q$ with $K_P = K_Q$\,, at least one of the following four properties holds:
\begin{itemize}
\item $P=[1]\wP P'$ and $Q=[1]\wP Q'$, or 
\item $P=[1]\sP P'$ and $Q=[1]\sP Q'$, or
\item $P=P'\wP [1]$ and $Q=Q'\wP [1]$, or 
\item $P=P'\sP [1]$ and $Q=Q'\sP [1]$.
\end{itemize}

In the first case, applying Proposition~\ref{prop:k_products} gives $F_1\wP K_{P'} = F_1\wP K_{Q'}$.  Let us expand this in the $F$-basis as $\sum c_\alpha F_\alpha$.  From~\eqref{equ:f_uparrow} and~\eqref{equ:uparrow}, we see that by just subtracting $1$ from the first part $\alpha_1$ in each term $F_\alpha$, we get $K_{P'} = K_{Q'}$, with $P'$ of size smaller than $P$.
By induction, $P'$ and $Q'$ are isomorphic, and hence so are $P$ and $Q$.

In the second case, we have  $F_1\sP K_{P'} = F_1\sP K_{Q'}$.
Let us expand this in the $F$-basis as $\sum c_\alpha F_\alpha$.
From~\eqref{equ:f_upuparrow} and~\eqref{equ:upuparrow}, we see that by just  removing the first part $\alpha_1=1$ 
in each term $F_\alpha$, we get $K_{P'} = K_{Q'}$, with $P'$ of size smaller than $P$, giving the same conclusion.

The last two cases are resolved similarly by focusing on the last entry of $\alpha$.
\end{proof}

%%%%%%%%%%%%%%%%%%%%%%%%%
\subsection{A different characterization of $\Cb$.}\label{sub:Cv2}
%%%%%%%%%%%%%%%%%%%%%%%%%

To end this section, we shall give a characterization of the class $\Cb$
in terms of forbidden subposets.

We need to distinguish two notions of subposets of labeled posets.
For the first, we ignore the labeling and consider the usual notion of induced subposets.
A poset (labeled or not) that avoids a set of unlabeled subposets $S$ in this sense is said to be $(S)$-free.
The second notion is of convex labeled subposets, considered as convex subposets with specific assignments of strict and weak edges.
A labeled poset that avoids a set of labeled convex subposets $S'$ in this sense is said to be $[S']$-free.
Of course these two notions can be used together.

The following result is the analogue in our context of \cite[Theorem 4.3]{HaTs17}.
\begin{proposition}\label{prop:Cb-pat}
Labeled posets in $\Cb$ are exactly $\pac$-$\pef$-free posets.
\end{proposition}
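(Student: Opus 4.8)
The plan is to prove the two implications separately and to think of the forbidden patterns as exactly the local obstructions to the recursive construction in Definition~\ref{def:C}. First I would establish the easy direction: every labeled poset in $\Cb$ is $\pac$-$\pef$-free. Since membership in $\Cb$ is defined recursively by disjoint union and by adding a single minimal or maximal element via $\wP$ or $\sP$, I would argue by induction on $|P|$ that none of these operations can create either forbidden configuration. The key observation is that the unlabeled patterns $\pa$ and $\pc$ each require an element with two incomparable elements above it \emph{and} two incomparable elements below it (or the bowtie-type crossing), which cannot arise in a rooted-tree-like recursive build where new elements are attached only at the very top or very bottom; and the labeled patterns $\pe$, $\pf$ forbid a single element having both a strict and a weak outgoing (resp.\ incoming) edge, which is exactly ruled out because each application of $[1]\wP(P,\om)$ or $[1]\sP(P,\om)$ attaches the new bottom element to all current minimal elements with edges of a single uniform strictness (and dually at the top).

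For the harder converse, I would show that any $\pac$-$\pef$-free labeled poset lies in $\Cb$, again by induction on the number of elements. The natural strategy mirrors the series-parallel / \textsf{N}-free decomposition used by Hasebe and Tsujie: given a connected $\pac$-$\pef$-free poset $P$, I want to peel off a single element from the top or the bottom via the inverse of one of the operations in (c) or (d) of Definition~\ref{def:C}. Concretely, I would look at the set of minimal elements and the set of maximal elements and argue that the $\pa$-$\pc$-freeness forces either a unique minimal element covered-below-by nothing but whose removal leaves a connected $\Cb$-candidate, or symmetrically a unique maximal element, so that $P=[1]\wP P'$, $P=[1]\sP P'$, $P=P'\wP[1]$, or $P=P'\sP[1]$ with $P'$ still $\pac$-$\pef$-free. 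The $\pe$-$\pf$-freeness then guarantees that the edges incident to the peeled element are uniformly strict or uniformly weak, so the decomposition genuinely uses $\wP$ or $\sP$ as defined. If $P$ is disconnected I simply decompose into connected components and apply the induction hypothesis together with clause (b).

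The main obstacle I anticipate is the combinatorial heart of the converse: proving that $\pac$-$\pef$-freeness really does force the existence of an element that can be peeled off cleanly, i.e.\ that a connected such poset must have a single minimal element attached to everything above it or a single maximal element attached to everything below it (the rooted-tree structure), rather than some more complicated connected shape. I would extract this from the \textsf{N}-free and bowtie-free characterization implicit in the unlabeled patterns $\pa$ and $\pc$: $\pa$ is the path-on-four-vertices pattern encoding \textsf{N}, and $\pc$ is the bowtie, so $\pac$-freeness is exactly the \textsf{N}-free plus bowtie-free condition of \cite[Theorem~4.3]{HaTs17}, which already yields the series-parallel rooted-tree decomposition of the \emph{unlabeled} poset. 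The remaining work is then to verify that the \emph{labeled} refinement $\pef$ exactly matches the ``fair'' uniformity condition needed to realize that decomposition inside $\Cb$ rather than merely as an unlabeled series-parallel tree. I expect the bookkeeping of matching each forbidden labeled pattern to a failure of uniform strictness at a peeled vertex to be the most delicate step, but conceptually routine once the unlabeled decomposition from Hasebe--Tsujie is in hand.
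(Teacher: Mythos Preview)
Your proposal is correct and follows essentially the same route as the paper: both directions by induction on $|P|$, the forward direction by checking that the recursive operations cannot introduce any of the four patterns, and the converse by peeling off a unique extremal element using the Hasebe--Tsujie result and then invoking $\pef$-freeness to ensure the incident edges have uniform strictness. The only remark is that the obstacle you flag as ``the combinatorial heart'' is lighter than you anticipate: the paper simply quotes \cite[Lemma~4.4]{HaTs17} (restated here as Lemma~\ref{lem:unique-max-min}) to get that a connected $\pac$-free poset has a unique minimal or a unique maximal element, and a unique minimal element is automatically below every other element, so the peeling is immediate once that lemma is in hand.
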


The proof is modeled on the proof of \cite[Theorem 4.3]{HaTs17}.  We first recall \cite[Lemma 4.4]{HaTs17}.
\begin{lemma}\label{lem:unique-max-min}
A finite connected $\pac$-free poset has a unique minimal or maximal element.
\end{lemma}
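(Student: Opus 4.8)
The plan is to argue the contrapositive. Writing $\pac=(\textsf{N},\text{bowtie})$, so that $\pac$-freeness means $P$ contains neither an induced \textsf{N} nor an induced bowtie, I will show that if $P$ is finite, connected, and has at least two minimal elements \emph{and} at least two maximal elements, then it must contain one of these two forbidden patterns. Since a finite nonempty poset fails to have a unique minimal element precisely when it has two distinct minimal elements (and likewise for maximal elements), this yields the lemma. The main device is the bipartite graph $B$ whose two vertex classes are the minimal elements and the maximal elements of $P$, with an edge joining $m$ and $M$ exactly when $m<_P M$.

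First I would record that $B$ is connected, as a consequence of connectivity of the Hasse diagram. For a cover $x\prec_P y$, every minimal element below $x$ is also below $y$, and any such minimal element together with any maximal element above $y$ is an edge of $B$; hence all minimal elements below $x$, all minimal elements below $y$, and all maximal elements above $y$ lie in a single component of $B$. Propagating this observation along covers across the connected Hasse diagram shows that every minimal and every maximal element of $P$ lies in one component, so $B$ is connected.

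The heart of the argument is the following consequence of $\pac$-freeness: if a minimal element $n$ satisfies $n<_P M$ and $n<_P M'$ for two distinct maximal elements $M,M'$, then in $B$ both $M$ and $M'$ have degree $1$, with sole neighbour $n$. To see this, suppose some minimal element $p\neq n$ satisfied $p<_P M$ but $p\not<_P M'$. Then on $\{p,n,M,M'\}$ the only relations are $p<_P M$, $n<_P M$, $n<_P M'$ (the pairs $p,n$ and $M,M'$ are incomparable as distinct minimal, resp.\ maximal, elements, and $p$ is incomparable to $M'$), which is exactly an induced \textsf{N}. So every minimal element below $M$ lies below $M'$, and symmetrically; thus the minimal elements below $M$ and those below $M'$ form one common set $S\ni n$. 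If $|S|\ge 2$, any two of its elements together with $M,M'$ induce a bowtie. Hence $S=\{n\}$, giving $\deg_B M=\deg_B M'=1$; applying this to each pair of maximal neighbours of $n$ shows that \emph{every} maximal neighbour of $n$ has degree $1$ and is adjacent only to $n$.

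With this in hand I would close by a connectivity contradiction. If some minimal element $n$ has two maximal neighbours, then all its neighbours are degree-$1$ maximal vertices, so the component of $n$ in $B$ consists only of $n$ and these leaves; as there is a second minimal element, it is unreachable from $n$, contradicting connectivity of $B$. Otherwise every minimal element has exactly one maximal neighbour (at least one since it lies below some maximal element, at most one by assumption), so no minimal element can be an interior vertex of a path between two maximal elements, whence $B$ — with its two or more maximal vertices — is disconnected, again a contradiction. In either case the assumption is impossible, proving the lemma. I expect the main obstacle to be the bookkeeping in establishing connectivity of $B$ and in verifying that the chosen four-element sets induce \emph{exactly} \textsf{N} and the bowtie (i.e.\ that no extra relations intrude); the pattern-recognition step itself is short once the correct labelling of the four elements is fixed.
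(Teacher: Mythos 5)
Your proof is correct, but there is nothing in the paper to compare it against: the paper does not prove this statement at all, recalling it verbatim as Lemma~4.4 of Hasebe and Tsujie \cite{HaTs17}. So where the paper supplies a citation, you supply a genuine argument, and it checks out. The connectivity of your bipartite graph $B$ is established correctly: for each $z \in P$ the minimal elements below $z$ and the maximal elements above $z$ span a complete bipartite (hence connected) subgraph of $B$ with both sides nonempty, and a cover $x \prec_P y$ forces these pieces to share a minimal vertex, so connectivity of the Hasse diagram propagates to $B$. The key dichotomy is exactly right: \textsf{N}-freeness forces two maximal elements above a common minimal element $n$ to have identical sets of minimal elements below them (your four-element configuration induces precisely \textsf{N}, since distinct minimals are incomparable, distinct maximals are incomparable, and $M' <_P p$ is ruled out by minimality of $p$), and bowtie-freeness then shrinks that common set to $\{n\}$, turning every maximal neighbour of $n$ into a leaf of $B$. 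Both terminal cases do contradict connectivity: if some minimal vertex has two or more neighbours, its component is a star missing the second minimal element; if every minimal vertex has degree one, then no path in the bipartite graph $B$ can join two of the (at least two) maximal vertices, since any such path would need an interior minimal vertex of degree at least two. The edge cases you flagged are also handled: under your contradiction hypothesis $|P| \geq 2$, so connectivity rules out any element being simultaneously minimal and maximal, which is what guarantees that the exhibited four-element subposets are exactly $\pac$ with no extra relations. In short, this is a complete, self-contained proof of a lemma the paper only imports.
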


\begin{proof}[Proof of Proposition \ref{prop:Cb-pat}]
Let $P$ be an element of $\Cb$. 
We shall prove that it is $\pac$-$\pef$-free by induction on its size.
If $P$ is disconnected then $P=P' \sqcup P''$ with $P'$ and $P''$ in $\Cb$ and non-empty.
By the induction hypothesis, $P'$ and $P''$ are $\pac$-$\pef$-free, and thus so is $P$. 
If $P$ is connected, then by definition of $\Cb$, $P$ is of the form 
$[1]\wP P'$ or $[1]\sP P'$ or $P'\wP [1]$ or $P' \sP [1]$ for some $P'$ in $\Cb$.
We may assume without loss of generality that $P=[1] \wP P'$.
The induction hypothesis shows that $P'$ is $\pac$-$\pef$-free. 
This implies that $P$ is $\pac$-free too; otherwise, we would find three elements $a, b, c$ in $P'$ such that the subposet $(\{1, a, b, c\}, \le_P )$ is isomorphic to $\pa$ or $\pc$, 
but this is absurd since $\pa$ and $\pc$ do not have a unique minimal element.
A similar idea implies that $P$ is $\pef$-free too; otherwise, we would find two elements $a, b$ in $P'$ such that the convex labeled subposet $(\{1, a, b\}, \le_P )$ is equal to $\pe$ or $\pf$, but this is impossible by construction.

Let us then prove the converse.
The proof is again based on induction on the size of $P$. 
If a finite $\pac$-$\pef$-free poset $P$ is disconnected, then $P = P' \sqcup P''$ for some non-empty $\pac$-$\pef$-free posets $P'$ and $P''$. 
The induction hypothesis shows that $P'$ and $P''$ are in $\Cb$, and thus so is $P$. 
So let us suppose that $P$ is connected.
Because of Lemma~\ref{lem:unique-max-min}, together with the $\pef$-freeness, we may assume without loss of generality 
that $P$ is of the form $[1] \wP P'$ for some poset $P'$. Since $P'$ is a convex subposet of $P$, 
it is $\pac$-$\pef$-free, from which the induction hypothesis implies that $P'$ is in $\Cb$, and thus so is $P$.

\end{proof}

%%%%%%%%%%%%%%%%%%%%%%%%%
%%%%%%%%%%%%%%%%%%%%%%%%%
\section{Open problems}\label{sec:conclusion}
%%%%%%%%%%%%%%%%%%%%%%%%%
%%%%%%%%%%%%%%%%%%%%%%%%%

%%%%%%%%%%%%%%%%%%%%%%%%%
\subsection{Notes on the main conjectures}
%%%%%%%%%%%%%%%%%%%%%%%%%

Resolving any one of Conjectures~\ref{con:chromatic_trees}, \ref{con:poset_trees} and~\ref{con:labeled_rooted_trees} would represent a significant advance.  Taking them in turn, we offer some further observations.

\begin{itemize}
\item Even though our starting point was Conjecture~\ref{con:chromatic_trees}, the only approach taken here is to consider it in terms of Conjecture~\ref{con:poset_trees}.  Perhaps there is a more direct way to tackle the former.  

\item One difficulty we failed to surmount in tackling Conjecture~\ref{con:poset_trees} was how to use the fact that the posets under consideration are trees; any methods used cannot apply to the posets in~Figure~\ref{fig:equal_kp}.

\item A special case of Conjecture~\ref{con:labeled_rooted_trees} worthy of consideration is that of binary trees, defined here as rooted trees where every element has exactly 0 or 2 children.  

Referring to Subsection~\ref{sub:classes}, one could consider series-parallel posets where we allow a mixture of strict and weak edges.   In particular, one could define \emph{fair series-parallel posets} by replacing all appearances of ``$[1]$'' in parts (c) and (d) of Definition~\ref{def:C} with ``$(Q, \om')$.''  Does the analogue of Theorem~\ref{thm:fair-trees} hold?

For any class of posets, labeled or not, we expect the crux of a proof would be the irreducibility, as in Proposition~\ref{prop:irred-C}, \cite[Lemma~3.13]{HaTs17}, and \cite[Theorem~4.19]{LiWe21}.  The general question of the irreducibility of $\kpw$ appears as \cite[Questions~7.2 and~7.3]{McWa14}. The irreducibility of $M_\alpha$ and $F_\alpha$ is shown in \cite{LaPy08}.
\end{itemize}

%%%%%%%%%%%%%%%%%%%%%%%%%
\subsection{Quasisymmetric power sum bases}
%%%%%%%%%%%%%%%%%%%%%%%%%

We have given much less consideration to other open problems which we next describe.  Liu and Weselcouch's impressive progress in the naturally labeled case \cite{LiWe21} depends on the expansion of $K_P$ in the (unnormalized) power sum basis $\psi_\alpha$ of type I and, in particular, the combinatorial interpretation of this expansion due to Alexandersson and Sulzgruber \cite{AlSu21}.  Can other bases of \qsym\ give new insight?  Even though the expansion of $K_P$ in other bases might be neither integral nor positive, there could still be interesting combinatorics with appropriate normalization and interpretation of the signs.  We did a careful study of the expansion of $K_P$ in the (unnormalized) power sum basis $\phi_\alpha$ of type II.  Here, we are following the notation of Ballantine et al.~\cite{BDHMN20}, where extensive information about the bases $\psi_\alpha$ and $\phi_\alpha$ can be found. In particular, they show that the power sum symmetric function $p_\lambda$ expands as
\begin{equation}\label{equ:p_lambda}
p_\lambda = \sum_{\alpha\,:\,\widetilde{\alpha} = \lambda} z_\lambda \psi_\alpha
= \sum_{\alpha\,:\,\widetilde{\alpha} = \lambda} z_\lambda \phi_\alpha
\end{equation}
where both sums are over all compositions $\alpha$ whose weakly decreasing reordering is $\lambda$. As usual, $z_\lambda = 1^{m_1}m_1!\,2^{m_2}m_2! \cdots k^{m_k}m_k!$ where $m_i$ is the multiplicity of $i$ in $\lambda$ and where $\lambda_1 = k$. It follows immediately from~\eqref{equ:p_lambda} that when $f$ is a symmetric function, the coefficient of $\psi_\alpha$ in $f$'s $\psi$-expansion equals the coefficient of $\phi_\alpha$ in $f$'s $\phi$-expansion.  We offer the following question: is the converse true?  In other words, does the coefficients being equal give a characterization of symmetric functions? We have only anecdotal evidence related to this question and have not given it significant thought.

%%%%%%%%%%%%%%%%%%%%%%%%%
\subsection{Principal Specialization}\label{sub:ps}
%%%%%%%%%%%%%%%%%%%%%%%%%

Given a quasisymmetric function $f(\x)$ in infinitely many variables $x_1, x_2, \ldots$, we denote by $f(1, q, q^2, \ldots, q^{k-1})$ the element of $\mathbb{Z}[q]$ that results from setting $x_i=q^{i-1}$ for all $i \leq k$ and $x_i=0$ otherwise.  This is known as the \emph{principal specialization of order $k$} of $f$ \cite[Sections 7.8, 7.19]{Sta99}.  This specialization has a nice interpretation for $\kpw$: if 
\[
\kpw(1, q, q^2, \ldots, q^{k-1}) = \sum_{N\geq 0} a(N) q^N,
\]
then we see that $a(N)$ counts the number of $(P,\om)$-partitions $f:P \to \{0,\ldots,k-1\}$ of $N$.  Notice that $f$ is now allowed to map to 0 but cannot map to integers larger than $k-1$, and the sum of $f(p)$ as $p$ ranges over $P$ is $N$.  Similar interpretations apply to the principal specializations of $X_G$ and $X_{\diG}$\,.  If we let $k \to \infty$, we get the so-called \emph{stable principal specialization}  $\kpw(1, q, q^2, \ldots)$ that is essentially equivalent to $G_{P,\om}$ appearing in \cite[Subsection~3.15.2]{Sta12}.

Recall that Stanley's original conjecture states that $X_G(\x)$ distinguishes trees.  Clearly $X_G(1,q, q^2, \ldots, q^{k-1})$ contains far less information than $X_G(\x)$, making the following recent conjecture of Loehr and Warrington surprising.

\begin{conjecture}[\cite{LoWa22+}]
$X_G(1,q, q^2, \ldots, q^{n-1})$ distinguishes trees with at most $n$ vertices.  
\end{conjecture}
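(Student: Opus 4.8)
The plan begins with a combinatorial reformulation. Setting $x_i = q^{i-1}$ for $i \le n$ and $x_i = 0$ otherwise in~\eqref{equ:stanley} restricts the sum to proper colorings using colors in $[n]$, and such a $\kappa$ contributes $q^{\sum_v(\kappa(v)-1)}$; hence for a tree $T$ on $m \le n$ vertices,
\[
X_T(1,q,\ldots,q^{n-1}) = \sum_{\kappa} q^{\sum_{v}(\kappa(v)-1)},
\]
summed over proper colorings $\kappa \colon V(T) \to [n]$. Trees with different vertex counts are separated immediately by the degree of this polynomial, so the genuine content is to distinguish trees of a common size $m$ using $n \ge m$ colors, the tightest instance being $m = n$. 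One also records the palindromic symmetry arising from complementation $\kappa \mapsto (n+1)-\kappa$: the coefficients of $q^N$ and $q^{m(n-1)-N}$ coincide.

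The natural next step is a linear-algebra reduction. Writing $X_T(\x) = \sum_{\lambda \vdash m} c_\lambda m_\lambda$, the specialization equals $\sum_{\lambda} c_\lambda\, m_\lambda(1,q,\ldots,q^{n-1})$, so it recovers $X_T(\x)$ in full precisely when the polynomials $\{m_\lambda(1,q,\ldots,q^{n-1})\}_{\lambda \vdash m}$ are linearly independent over $\Q$. I would first try to settle this independence. A promising start is that the lowest-degree term of $m_\lambda(1,q,\ldots,q^{n-1})$ is $q^{n(\lambda)}$ with coefficient $1$, where $n(\lambda) = \sum_i (i-1)\lambda_i$, giving an almost-triangular structure. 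The difficulty is that $\lambda \mapsto n(\lambda)$ is not injective (for instance $n(3,3) = n(4,1,1)$), so the lowest terms only force the sum of coefficients over each level set $\{n(\lambda) = \mathrm{const}\}$ to vanish; resolving these collisions with finer coefficient data is the technical heart of the lemma.

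If this independence holds for all relevant $m \le n$, then $X_T(1,q,\ldots,q^{n-1})$ carries exactly the same information as $X_T(\x)$, and the conjecture becomes \emph{equivalent} to Stanley's open conjecture that $X_T(\x)$ distinguishes trees. If instead the independence fails, the specialization genuinely discards information, the conjecture must be attacked directly, and this would explain why it is striking that so little data still suffices. Determining which alternative occurs is itself a clean sub-problem worth isolating.

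For a self-contained proof that does not rest on Stanley's conjecture, one must exploit the tree structure directly. I would attempt an induction on leaves, tracking how the univariate polynomial changes as a leaf is removed, and extract from its low-order coefficients genuine tree invariants—the smallest exponents are controlled by the bipartition and independence structure of $T$—hoping to bootstrap these into the full isomorphism type, in the inductive spirit of Theorem~\ref{thm:fair-trees}. The decisive obstacle is the collapse of the entire multivariate monomial-basis structure into a single variable: this destroys exactly the multiplicativity and support information that drive the irreducibility and unique-factorization arguments of Proposition~\ref{prop:irred-C}, and recovering a tree on $m$ vertices from a polynomial with far fewer than $p(m)$ coefficients is the crux, consistent with the conjecture being at least as hard as Stanley's.
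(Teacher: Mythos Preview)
The statement you were asked to prove is labeled a \emph{conjecture} in the paper, attributed to Loehr and Warrington, and the paper offers no proof of it whatsoever. The only remark the paper makes is the one-line observation that an affirmative answer would imply Stanley's conjecture that $X_G(\x)$ distinguishes trees, which is itself a famous open problem. So there is no ``paper's own proof'' to compare against.

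Your proposal is not a proof either, and to your credit you do not pretend otherwise: you outline a possible linear-algebra reduction (independence of the specialized $m_\lambda$), correctly note that it would make the conjecture \emph{equivalent} to Stanley's, and then identify the obstacles to a direct attack. That analysis is sound as far as it goes, and your observation that the implication runs from Loehr--Warrington to Stanley matches exactly what the paper says. But the document you have written is a research plan, not a proof: the independence of $\{m_\lambda(1,q,\ldots,q^{n-1})\}_{\lambda\vdash m}$ is left unresolved, the leaf-induction is only gestured at, and you yourself conclude that the problem is at least as hard as Stanley's conjecture. Since the target statement is open, no complete argument was available to you, and the appropriate response is simply to record that the paper states this as a conjecture without proof.
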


Certainly an affirmative answer to Loehr and Warrington's conjecture would prove Stanley's conjecture.

Inspired by this, we offer the following conjecture which should be compared with Conjecture~\ref{con:poset_trees}.

\begin{conjecture}\label{con:sp_poset_trees}
$\KK{P}(1, q, q^2, \ldots, q^{n-1})$ distinguishes posets with $n$ elements that are trees. 
\end{conjecture}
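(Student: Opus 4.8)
The plan is to first convert the single-variable polynomial into a statement about linear extensions, and then to attempt to reconstruct the tree from it. Combining Theorem~\ref{theo:KinF} with the standard principal specialization of a fundamental quasisymmetric function (obtained by the usual substitution $a_i = b_i + \#\{s \in \des(\pi) : s < i\}$ that absorbs the strict steps into a $q$-shift), one gets
\[
\KK{P}(1,q,\ldots,q^{n-1}) = \sum_{\pi \in \L(P,\om)} q^{\sum_{s \in \des(\pi)}(n-s)}\binom{2n-1-|\des(\pi)|}{n}_q,
\]
where $\om$ is the order-reversing labeling. Thus the entire invariant is governed by the joint distribution, over the linear extensions of $P$, of the number of descents $|\des(\pi)|$ and the comajor index $\sum_{s\in\des(\pi)}(n-s)$, blurred together by the Gaussian binomials. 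The first task is to understand how much of this joint distribution can be read back off the resulting polynomial.

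Second, I would harvest the coarse invariants visible immediately. Because a strict $P$-partition $f$ must satisfy $\mathrm{jump}(p) \le f(p)-1 \le n-1-\mathrm{upjump}(p)$, the trailing term of the polynomial has exponent $\sum_p \mathrm{jump}(p)$ and the leading term has exponent $n(n-1)-\sum_p\mathrm{upjump}(p)$, so the extreme degrees recover the \emph{sums} of the jump and up-jump sequences, while setting $q=1$ returns the strict order polynomial evaluated at $n$. These are genuinely weaker than the full jump sequence available from $\KK{P}$ itself, but they already dispose of most pairs of trees, leaving only the delicate cases -- such as the rank-one pair of Figure~\ref{fig:rank_one} -- that agree on all of them.

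For the main argument I would try to induct on the structure of the rooted tree, decomposing it at its root (or peeling a leaf) and seeking a recursion for $\L(P,\om)$, and hence for the descent statistics above, that lets the polynomial pin down the multiset of subtrees hanging from the root. A seductive shortcut is to imitate the proof of Theorem~\ref{thm:fair-trees}: principal specialization of a fixed order is a ring homomorphism, so $\KK{P \sqcup Q}$ still factors as a product, and $\mathbb{Z}[q]$ is a unique factorization domain. The hard part -- and I expect it to be decisive -- is that this factorization is worthless here, because the specialization of a connected tree is freely reducible in $\mathbb{Z}[q]$ (it already carries cyclotomic factors from the Gaussian binomials), so there is no analogue of the irreducibility of Proposition~\ref{prop:irred-C} to anchor the recursion. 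One is therefore forced to disentangle the joint descent distribution from a single truncated polynomial using only the hypothesis that $P$ is a tree, which is precisely the difficulty that keeps the closely analogous Loehr--Warrington conjecture open; any real progress would likely have to exploit some rigidity special to taking the order to be exactly $n$.
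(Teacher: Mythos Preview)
This statement is a \emph{conjecture}, not a theorem: the paper does not prove it. Immediately after stating it, the authors write only that they have verified it computationally for $n\le 10$, and they list it among their open problems. So there is no ``paper's own proof'' to compare your proposal against.

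Your proposal is not a proof either, and to your credit you more or less say so. You correctly identify the principal-specialization formula, extract some coarse invariants (trailing and leading degrees, the value at $q=1$), and then sketch the natural inductive strategy modeled on Theorem~\ref{thm:fair-trees}. But you also correctly diagnose why that strategy stalls: irreducibility in $\qsym$ is what makes the recursive argument work for $\Cb$, and after specializing to $\mathbb{Z}[q]$ the images of connected posets are typically reducible (cyclotomic factors everywhere), so there is no analogue of Proposition~\ref{prop:irred-C} to drive the induction. That is a genuine obstruction, not a gap in your write-up, and it is exactly why the statement remains a conjecture. What you have written is a reasonable discussion of possible approaches and their limitations, but it should not be labeled a proof.
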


We have verified this conjecture for all such posets with at most 10 elements.  In fact, it seems that $\KK{P}(1, q, q^2, \ldots, q^{n-2})$ suffices when the posets have $n$ elements. 

\begin{conjecture}\label{con:sp_poset_trees_one_fewer}
$\KK{P}(1, q, q^2, \ldots, q^{n-2})$ distinguishes posets with $n$ elements that are trees. 
\end{conjecture}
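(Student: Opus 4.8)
The plan is to work from the combinatorial meaning of the specialization. Writing $k=n-1$, the polynomial $\KK{P}(1,q,\dots,q^{n-2})=\sum_{N\ge0}a(N)q^N$ has $a(N)$ equal to the number of strict $P$-partitions $f\colon P\to\{0,\dots,n-2\}$ with $\sum_{p}f(p)=N$; equivalently, by Theorem~\ref{theo:KinF} it equals $\sum_{\pi\in\L(P,\om)}F_{\co(\pi)}(1,q,\dots,q^{n-2})$, a nonnegative combination of the $q$-binomial specializations of the fundamental quasisymmetric functions. Since an affirmative answer here implies Conjectures~\ref{con:sp_poset_trees} and~\ref{con:poset_trees}, and since the parallel chromatic statement of Loehr and Warrington is itself open, I expect this to be genuinely hard; the realistic goal of a proof is to extract enough tree invariants from this single polynomial to reconstruct $P$ up to isomorphism.

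First I would record the easy extremal data. Because a strict $P$-partition must satisfy $f(p)\ge\mathrm{jump}(p)$, with equality achievable simultaneously (assigning each $p$ its jump is itself a valid strict $P$-partition), the smallest exponent appearing is $N_{\min}=\sum_p\mathrm{jump}(p)$, and its coefficient is $1$; dually, using $f(p)\le n-2-(\text{up-jump of }p)$, the top exponent recovers $\sum_p(\text{up-jump of }p)$ with coefficient $1$. In particular the $n$-element chain is the \emph{unique} tree on $n$ vertices whose specialization is identically $0$: it alone has a chain through all $n$ elements, so it requires $n$ strictly increasing values out of only $n-1$ available, whereas every non-chain tree has longest chain at most $n-1$ and so admits the strict $P$-partition $f=\mathrm{jump}$. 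Thus the chain is distinguished immediately. These recoveries, however, reproduce only invariants already on the necessary-conditions list of Section~\ref{sec:poset_results}, so they serve as a sanity check and a base case rather than a complete separation.

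The main line of attack I would pursue mirrors the fair-tree argument. Principal specialization is a ring homomorphism, so for a disjoint union we have $\KK{P_1\sqcup P_2}(1,q,\dots,q^{n-2})=\KK{P_1}(\cdots)\,\KK{P_2}(\cdots)$, and $\mathbb{Z}[q]$ is a unique factorization domain. One would then try to factor the specialization, match connected components, and for a connected tree peel off its unique minimal or maximal element (in the spirit of Lemma~\ref{lem:unique-max-min}) to induct on size, using the fact that the trailing/leading $q$-behaviour should detect whether the bottom or top edge is present and thereby reduce to a smaller tree whose specialization is determined by that of $P$.

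The hard part, and the reason this does not simply transfer, is twofold. First, factorization in $\mathbb{Z}[q]$ is far finer than factorization in \qsym: the specializations of two distinct components can share cyclotomic or linear factors, so the irreducibility of $\KK{P_i}$ in \qsym\ (Proposition~\ref{prop:irred-C}) gives \emph{no} control over the factorization of $\KK{P_i}(1,q,\dots,q^{n-2})$ in $\mathbb{Z}[q]$, and the clean component-matching behind the proof of Theorem~\ref{thm:fair-trees} collapses. Second, one cannot escape the difficulty by recovering the full invariant: the specialization is a polynomial of degree $O(n^2)$, hence carries only $O(n^2)$ coefficients, whereas $\KK{P}(\x)$ has potentially exponentially many, so \emph{the specialization cannot in general determine $\KK{P}(\x)$} and the problem does not reduce to Conjecture~\ref{con:poset_trees}, even granting it. The central obstacle is therefore that collapsing the multivariate quasisymmetric invariant to a single variable destroys exactly the irreducibility-plus-UFD structure that powered the earlier proofs, and any successful argument must instead work directly with the one-variable $q$-polynomial. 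I expect the decisive step to be finding a genuinely single-variable replacement for that structure — perhaps a sharp analysis of the intermediate coefficients of the $q$-binomial expansion $\sum_{\pi\in\L(P,\om)}F_{\co(\pi)}(1,q,\dots,q^{n-2})$ that exposes tree-specific rigidity — and this is where I would concentrate the effort.
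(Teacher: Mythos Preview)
This statement is a \emph{conjecture} in the paper, appearing in the open-problems section; the paper offers no proof and explicitly presents it as unresolved (supported only by computation for $n\le 10$). So there is no ``paper's own proof'' to compare against, and your proposal is, appropriately, a research outline rather than a proof. On that level your instincts are sound: you correctly extract the extremal data (the minimum exponent recovers $\sum_p \mathrm{jump}(p)$, the chain is the unique tree with zero specialization), you correctly note that specialization is a ring homomorphism, and you correctly diagnose the central obstruction, namely that the irreducibility-plus-UFD machinery behind Theorem~\ref{thm:fair-trees} collapses in $\mathbb{Z}[q]$.

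Two concrete issues to flag. First, you assert that an affirmative answer to Conjecture~\ref{con:sp_poset_trees_one_fewer} implies Conjecture~\ref{con:sp_poset_trees}. The paper explicitly says this implication is \emph{not obvious}: knowing $\KK{P}(1,q,\dots,q^{n-2})$ does not in any evident way determine $\KK{P}(1,q,\dots,q^{n-1})$, so ``fewer variables distinguish'' does not formally imply ``more variables distinguish.'' (The implication to Conjecture~\ref{con:poset_trees} \emph{is} correct, since the specialization is a function of $\KK{P}(\x)$.) Second, your appeal to Lemma~\ref{lem:unique-max-min} to peel off a unique minimal or maximal element does not apply to general tree posets: that lemma requires $\pac$-freeness, but the \textsf{N} poset is itself a tree with neither a unique minimum nor a unique maximum. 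So the inductive peeling step you sketch has no foothold for arbitrary trees, and this is precisely the gap between the fair-tree result and the general tree conjectures.
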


It is not obvious to us whether Conjecture~\ref{con:sp_poset_trees_one_fewer} being true would imply Conjecture~\ref{con:sp_poset_trees}.

\begin{example}
Referring to Figure~\ref{fig:f_support}, the labelel poset $P$ on the left satisfies
\[
\KK{P}(1, q, q^2, q^3) = q^4 + 2q^5 + 4q^6 + 4q^7 + 5q^8 + 4q^9 + 2q^{10} + q^{11}
\]
whereas its dual $P^*$ on the right satisfies
\[
\KK{P^*}(1, q, q^2, q^3) = q^4 + 2q^5 + 4q^6 + 5q^7 + 4q^8 + 4q^9 + 2q^{10} + q^{11}.
\]
So, even though their $F$-supports do not distinguish them, their principal specialization of order 4 does.  

This example also exhibits how the principal specialization of $\KK{P}$ of order $k$ compares to that of its dual $P^*$.  One can check that for general $P$ with $n$ elements, the coefficient of $q^N$ in $\KK{P}(1, q, q^2, \ldots, q^{k-1})$ equals the coefficient of $q^{n(k-1)-N}$ in $\KK{P^*}(1, q, q^2, \ldots, q^{k-1})$.  
%To show this, consider a $P$-partition $f:P \to \{0,\ldots,k-1\}$ of $N$.  Under the dual operation, each $p \in P$ gets mapped bijectively to an element $p^* \in P^*$.  We can define a map $f^*:P^* \to \{0,\ldots,k-1\}$ by $f^*(p^*) = k-1-f(p)$.  We see that this yields a bijection from the strict $P$-partitions $f:P \to \{0,\ldots,k-1\}$ of $N$ to strict $P^*$-partitions $f^*:P^* \to \{0,\ldots,k-1\}$ of $n(k-1)-N$.
\end{example}

However, $\KK{P}(1, q, q^2, \ldots, q^{k-1})$ for $k-1<n-2$ does not distinguish posets with $n$ elements that are trees because it will evaluate to 0 for any posets containing a chain of length $n-2$.  This begs the question of what happens if we consider posets with all weak edges.  Unlike in the case of $\KK{P}(\x)$ where $\KK{P}(\x)$ can be obtained from $K_P(\x)$ and vice versa, the same is not true for their principal specializations.  For example, $K_P(1,q,q^2)$ suffices to distinguish posets with 7 elements that are trees.  It is not clear for general $n$ which values of $k$ are sufficient for $K_P(1,q,\ldots,q^{k-1})$  to distinguish posets with $n$ elements that are trees.  Our computations are consistent with Conjectures~\ref{con:sp_poset_trees} and~\ref{con:sp_poset_trees_one_fewer} remaining true when $\KK{P}$ is replaced by $K_P$.

Finally we note that the principal specialization version of Conjecture~\ref{con:labeled_rooted_trees} is false: $F_{131}$ and $F_{212}$ correspond to $\kpw(\x)$ for different chains with 5 elements but 
\[
F_{131}(1,q,q^2,q^3,q^4) = F_{212}(1,q,q^2,q^3,q^4).  
\]

%%%%%%%%%%%%%%%%%%%%%%%%%
\subsection{Another quasisymmetric version of Stanley's conjecture}
%%%%%%%%%%%%%%%%%%%%%%%%%

We close with another question which is wide open and again brings us all the way back to Stanley's original consideration of $X_G(\x)$ distinguishing (undirected) graphs and Conjecture~\ref{con:chromatic_trees}.  We ask the following ill-defined question: does $X_{\diG}(\x, t)$ distinguish \emph{undirected} trees? Here is one concrete way to make this question make sense.

\begin{question}
Given an undirected tree $G$, construct the multiset $\{X_{\diG} (\mathbf{x}, t)\}_{\diG}$ as $\diG$ varies over all orientations of $G$. Do the same for a different undirected tree $H$. Are the multisets for $G$ and $H$ always different?
\end{question}

%%%%%%%%%%%%%%%%%%%%%%%%%
\section*{Acknowledgements} 
%%%%%%%%%%%%%%%%%%%%%%%%%

We thank the anonymous referee for suggestions that helped clarify some explanations.  Much of this paper was written while the third author was on sabbatical at Universit\'e de Bordeaux; he thanks LaBRI for its hospitality.  Computations were performed using SageMath \cite{sage}.

\bibliography{distinguishing_trees}
\bibliographystyle{alpha}

\end{document}